\newtheorem{theorem}{Theorem}
\newtheorem{corollary}[theorem]{Corollary}
\newtheorem{lemma}[theorem]{Lemma}
\newtheorem{proposition}[theorem]{Proposition}
\newtheorem*{definition}{Definition}
\newtheorem*{CVtheorem}{Theorem}
\newtheorem*{claim}{Claim}
\newcommand{\G}{\Gamma}
\newcommand{\Z}{\mathbb Z}
\newcommand{\AG}{A_\G}
\newcommand{\KW}{K_W}       %union of stars of roses of minimal W-norm
\newcommand{\KS}{K_{n,k}}    % \KW for W={x_1,...,x_s}
\newcommand{\KO}{P_{n,k}}    % Collins subcomplex, P for "point" intersections
\newcommand{\LO}{D_{n,k}}     %Complex for Out(n,s), D for "disjoint"
\newcommand{\out}{{\rm Out}}
\newcommand{\aut}{{\rm Aut}}
\title[Automorphisms of RAAGs]{Automorphisms of two-dimensional RAAGS and partially symmetric automorphisms of free groups}
\author[Bux, Charney, and Vogtmann]{Kai-Uwe Bux, Ruth Charney,  and Karen Vogtmann}
\thanks {R. Charney was partially supported by NSF grant DMS 0705396.  K. Vogtmann was partially supported by NSF grant DMS 0705960}
\begin{document}
\maketitle

\begin{abstract}  We compute the VCD of the group of partially symmetric outer automorphisms of a free group.  We use this to obtain new upper and lower bounds on the VCD of the outer automorphism group of a 2-dimensional right-angled Artin group.  In the case of a right-angled Artin group with defining graph a tree, the bounds agree.
\end{abstract}

\section{Introduction}

This paper continues the study of outer automorphism groups of right-angled Artin groups begun in \cite{CCV} and \cite{ChVo}. It was shown  in \cite{ChVo}  that, for any finite simplicial graph $\G$ and associated right-angled Artin group $A_\G$,  the group $\out(A_\G)$  has finite virtual cohomological dimension.  In the two-dimensional case, i.e. when $\G$ is connected and has no triangles,  upper and lower bounds on this dimension were given in \cite{CCV}.   In the present paper we reconsider the two-dimensional case  and improve both the upper and the lower bounds.  If $\G$ is a tree these  bounds agree, giving:

\medskip\noindent{\bf Theorem~\ref{Tree}.} {\em   If $\G$ is a tree, the VCD  of $\out(\AG)$ is equal to $e+2\ell-3$, where $e$ is the number of edges of the tree and $\ell$ is the number of leaves.}
\medskip

 One reason the tree case is of particular interest is that right-angled Artin groups based on trees are fundamental groups of irreducible 3-manifolds; in fact, by a theorem of Droms a right-angled Artin group is the fundamental group of an irreducible 3-manifold if and only if it is based on a graph which is a tree or a triangle \cite{Dro}. 
 
Recall that a graph is {\it 2-connected} if it has no separating nodes. If $\G$ has no cycles of length less than 5 but is not a tree, the upper and lower bounds differ by the Euler characteristic of $\G$:

\medskip\noindent{\bf Theorem~\ref{NoShortCycles}.} {\em   If $\G$ is connected with no triangles or squares but is not a tree, the VCD of $\out(\AG)$ satisfies  $$ \pi  + 2\ell-1 \leq\hbox{\sc vcd}(\out(A_\G)) \leq \pi+2\ell-1-2\chi(\G),$$  where $\ell$ is the number of leaves of $\G$, $\pi$ is the number of maximal 2-connected subgraphs, and $\chi(\G)$ is the Euler characteristic of $\G$. }
\medskip

In particular, if $\G$ is connected with Euler characteristic 0, this gives the exact VCD of $\out(A_\G)$.  Bounds on the VCD  for graphs $\G$ with no triangles are given in full generality in Theorems~\ref{LBgeneral} and \ref{UBgeneral}.

    We find our lower bound by constructing a free abelian subgroup of $\out(\AG)$. For the upper bound, we reduce the problem to that of finding the VCD of the subgroup $P\Sigma(n,k)$ of   $\out(F_n)$ generated by automorphisms which send the first $k$ generators to conjugates of themselves.  If $n=k$, this is known as the {\it pure symmetric group} and was shown by Collins \cite{Col} to have virtual cohomological dimension equal to $n-2$.  We determine the exact VCD for any $k\geq 1$ as follows:
    
\medskip\noindent{\bf Theorem~\ref{PSigma}.} {\em For any $k\geq 1$, the group  $P\Sigma(n,k)$ has VCD equal to $2n-k-2$.}
\medskip
    
 $P\Sigma(n,k)$ has an obvious free abelian subgroup of rank $2n-k-2$, giving a lower bound on its VCD.  We show that this is equal to the VCD by finding a contractible complex of dimension $2n-k-2$ on which $P\Sigma(n,k)$ acts properly.  
 This subcomplex is a natural deformation retract of the {\it minimal subcomplex} $K_W$ of the spine of outer space associated to the set of cyclic words $W=\{x_1,\ldots,x_k\}$. These minimal subcomplexes were defined and shown to be contractible for any set $W$ of cyclic words in \cite{CuVo}.   
 
 The authors would like to thank Adam Piggott for helpful conversations.

%%%%%%%%%%%%%%%%%%%%%%%%%%%%%%%%%%%%%%

\section{Two-dimensional right-angled Artin groups} 

A right-angled Artin group $\AG$ is two-dimensional if the maximal rank of an abelian subgroup is two, or equivalently, if the defining graph $\G$ has no triangles.  In this paper, we assume, in addition, that $\G$ is a connected graph and is not the star of a single vertex.  (If $\G$ is a star, then $\AG$ is the direct product of $\Z$ and a free group and the VCD is easily computed directly.)

 In this section, we establish upper and lower bounds for the VCD of  $\out(\AG)$.

\subsection{Lower bound for  triangle-free $\G$}
We find a  lower bound on the VCD of $\out(\AG)$  by constructing a free abelian subgroup.  This subgroup generally has larger rank than the one constructed  in \cite{CCV}.

We begin by recalling from \cite{CCV} the construction of a subgraph $\G_0$ of $\G$.  Nodes of $\G$ are  partially ordered by the relation $v\leq w$ if $lk(v)\subseteq lk(w)$ and two nodes are equivalent if they have the same link.  Choose a node in each maximal equivalence class,  and let $\G_0$ be the (connected) subgraph of $\G$ spanned by these nodes. For example, if $\G$ has no squares as well as no triangles, then every interior node of $\G$ is maximal, and $\G_0$ is the graph obtained from $\G$ by pruning off all of its leaves.  More generally, the assumption that $\G$ is not a star guarantees that leaves of $\G$ never lie in $\G_0$.

\newcommand{\wplus}{\hat{w}}
\newcommand{\vp}{\hat{v}} % next node towards e_0
\newcommand{\ov}{\hat{v}} % node in \G_0 dominating v

We will now describe a method of finding a large collection of commuting automorphisms; these will be partial conjugations by nodes in $\G_0$ and transvections of nodes in $\G_0$ onto nodes in $\G-\G_0$.

Fix an edge $e_0$ of $\G_0,$ with endpoints $v_0$ and $w_0$, and let $T_0$ be a maximal tree in $\G_0$ containing $e_0$.  Orient each edge of $T_0-e_0$ towards $e_0$.  For each node $v$ of $\G_0$,  let $\vp$ denote the (unique) node of $T_0$ which is adjacent to $v$ and is not on an incoming edge. For the endpoints of $e_0$, we have $\vp_0=w_0$ and $\wplus_0=v_0$.

For nodes  $v$ of $\G-\G_0$,  choose a node $\ov\in \G_0$ with $v\leq \ov$.
 
 Let $G(e_0,T_0)$ be the   subgroup of $\out(\AG)$ generated by the following automorphisms:
\begin{enumerate}
\item For each $v\in \G_0$, partial conjugations by $\vp$ on components of $\G-\{v\}$ which do not contain $\vp$.  
\item For each leaf $u$, right transvections $u\mapsto uv$ and  $u\mapsto u\vp$, where $v$ is the (unique) node adjacent to $u$.
\item For each $v\in \G-\G_0$ which is not a leaf, right and left transvections $v\mapsto v\ov$ and $v\mapsto \ov v$.  
\end{enumerate}
 The left transvection $u\mapsto \vp u$ for $u$ a leaf is the right transvection composed with a partial conjugation of type (1), so these are not included in our list.  We remark that the only nodes of $\G$ which produce non-trivial automorphisms of type (1) are those which separate $\G$.  Note also that if $\G$ has no squares, then there are no type (3) transvections.

\begin{lemma} 
The generators of $G(e_0,T_0)$ all commute. 
\end{lemma}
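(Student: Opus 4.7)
The plan is a case analysis using the standard commutation relations for Laurence-type generators of $\out(A_\G)$. The relevant rules we shall invoke are: two right transvections with the same source, $a\mapsto ab$ and $a\mapsto ac$, commute iff $b$ and $c$ span an edge of $\G$; transvections on distinct sources commute whenever neither source appears in the other's image; a transvection $a\mapsto ab$ commutes with a partial conjugation by $c$ on a component $X$ of $\G-\{w\}$ whenever $a$ and $b$ lie in the same component of $\G-\{w\}$, or one of them equals $w$; and two partial conjugations commute when their supports are nested or disjoint in a compatible way (or when their conjugating vertices commute).

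First I would dispatch the easier cases not involving type (1). For Type (2)-(2), transvections from two distinct leaves have distinct sources, and no leaf appears as $v$ (the unique neighbor of another leaf would force a two-vertex graph, excluded by assumption) nor as $\hat v$ (which lies in $\G_0$, which contains no leaves). Two transvections from the same leaf, $u\mapsto uv$ and $u\mapsto u\hat v$, commute because $v$ and $\hat v$ are adjacent in $T_0\subset\G$. For Type (3)-(3), the sources lie in $\G-\G_0$ while the images $\hat v,\hat w$ lie in $\G_0$, so no source appears in another image; a direct computation handles the left-vs-right transvection for the same $v$. Type (2)-(3) is analogous, using the observation that a non-leaf $w\in\G-\G_0$ is never adjacent to a leaf (such a neighbor would force $w$ to be the unique maximum of its equivalence class, hence in $\G_0$).

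For the interactions with partial conjugations (Types (1)-(2) and (1)-(3)), consider a transvection $a\mapsto ab$ with $b$ adjacent to $a$ — the only situation arising in our list — together with a partial conjugation by $\hat w$ on a component $X$ of $\G-\{w\}$ with $\hat w\notin X$. Because $b$ is adjacent to $a$, either $b=w$ (so $b$ is fixed by the partial conjugation) or $a$ and $b$ lie in the same component of $\G-\{w\}$; in either case the standard criterion applies and the two generators commute.

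The main obstacle is Type (1)-(1): two partial conjugations by $\hat v,\hat w$ on components of $\G-\{v\}$ and $\G-\{w\}$ not containing $\hat v,\hat w$, respectively. Here the oriented tree $T_0$ plays an essential role: because every edge of $T_0-e_0$ is oriented toward $e_0$, the vertex $\hat v$ is characterized as the unique neighbor of $v$ in $T_0$ on the $e_0$-side. It follows that for any two nodes $v,w\in\G_0$, either one lies in the $e_0$-side of the other (in which case the supports are nested, with conjugating vertex of the smaller contained in the support of the larger) or they lie on opposite sides (in which case the supports are disjoint and each conjugator is outside the other's support). A short combinatorial check then verifies the standard commutation criterion for partial conjugations in each configuration.
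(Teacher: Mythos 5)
Your overall plan --- a case analysis over pairs of generator types using standard commutation criteria --- matches the paper's, and your treatment of the transvection--transvection cases and of the two--partial-conjugation case is essentially sound (in the nested-support case one must correct by an inner automorphism, which is legitimate since we work in $\out(\AG)$; I take your ``compatible way'' to absorb this). The genuine gap is in cases (1)--(2) and (1)--(3). You rest that entire argument on the assertion that every transvection $a\mapsto ab$ on the list has $b$ adjacent to $a$, and this is false. For a leaf $u$ with neighbour $v$, the second type (2) generator is $u\mapsto u\hat v$, and $\hat v$ is adjacent to $v$ but not to $u$ (a leaf has only the one neighbour $v$). For a type (3) generator $v\mapsto v\hat v$ one has $lk(v)\subseteq lk(\hat v)$, which in a triangle-free graph forces $v$ and $\hat v$ to be \emph{non}-adjacent. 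These are exactly the transvections that give $G(e_0,T_0)$ its extra rank, and for them your criterion is silent: in the path $u-v-x-y$ (with $u,y$ leaves) the generator $u\mapsto ux$ and the partial conjugation $u\mapsto xux^{-1}$ arising from the separating node $v$ place $a=u$ and $b=x$ in different components of $\G-\{v\}$, with neither equal to $v$.

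That pair does commute, but for a reason your criterion does not capture: the conjugating letter is $x=b$ itself. The missing arguments are the ones the paper supplies. For a leaf $u$ at $v$: when the conjugating letter of the partial conjugation is $v$ or $\hat v$, commutation follows because the transvecting letter and the conjugating letter commute in $\AG$; when the partial conjugation comes from a node $w\notin\{u,v,\hat v\}$, the path $u$--$v$--$\hat v$ survives in $\G-\{w\}$, so $u$, $v$ and $\hat v$ are either all in the support or all outside it, and in the former case one corrects by an inner automorphism. For a non-leaf $v\in\G-\G_0$: since $v$ has valence at least $2$ and $lk(v)\subseteq lk(\hat v)$, the nodes $v$ and $\hat v$ lie on a common square, hence in a common piece, so no partial conjugation on the list can separate them --- again either both are conjugated or neither is. Without these arguments (or some replacement for them) the proof is incomplete at precisely the step where the choice of $\hat v$ and the structure of $T_0$ matter.
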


\begin{proof} Transvections affecting different nodes commute, since they are transvections by nodes in $\G_0$ onto nodes not in $\G_0$.
The two transvections affecting an interior node of $\G-\G_0$ also commute since they operate on different sides of the node.    The transvections affecting a single leaf are by commuting generators, so all of these transvections commute.

If $v_1$ and $v_2$ are separating nodes,  partial conjugations by $\vp_1$ and $\vp_2$ have disjoint support  if $\vp_1$ and $\vp_2$ are not comparable in the tree $T_0$, i.e. if the arc between them is not totally oriented.  If $\vp_1$ and $\vp_2$ are comparable, then the supports of the related partial conjugations are nested, so by composing with an inner automorphism of the larger one, their supports can be made disjoint, and they commute.

Since $v$ and $\vp$ commute, transvections by $v$ or $\vp$ onto a leaf $u$ at $v$ commute with partial conjugation by $v$ or $\vp$.  For partial conjugations by $\wplus \neq v,\vp$,
 either $u$, $v$ and $\vp$ are all in the support  of the partial conjugation or none of them are.  If they are all in the support, we can compose the partial conjugation by an inner automorphism to make the support disjoint. 

If $v$ is an interior node of $\G-\G_0$, then $v$ and $v_0$ belong to a single piece, so either both are affected by one of the  partial conjugations on our list or neither is.  
\end{proof}

The maximal 2-connected subgraphs of $\G$ decompose $\G$ into subgraphs, any two of which intersect at most in a single node.   These subgraphs are called the {\it pieces} of $\G$. (This terminology comes from the theory of tree-graded spaces \cite{DrSa}; the graph $\G$ is tree-graded with respect to its pieces.)

Let $v$ be a node of $\G$, and $\Delta(v)$ the union of all pieces of $\G$ containing $v$.   If every node of $\Delta(v)$ is either adjacent to $v$ or is $\leq v$, we say $v$ is a  {\it hub} (see Figure~\ref{hub}).   
If $\G$ is a tree then $\Delta(v) = st(v)$ for all interior nodes, so both endpoints of $e_0$ are hubs.

\begin{figure}\label{hub}
\labellist
\small\hair 2pt
\pinlabel{$v$} at 147 88
\pinlabel {$\Delta(v)$} [r] at 155 25
\endlabellist
\begin{center}
\includegraphics[width=2.5in]{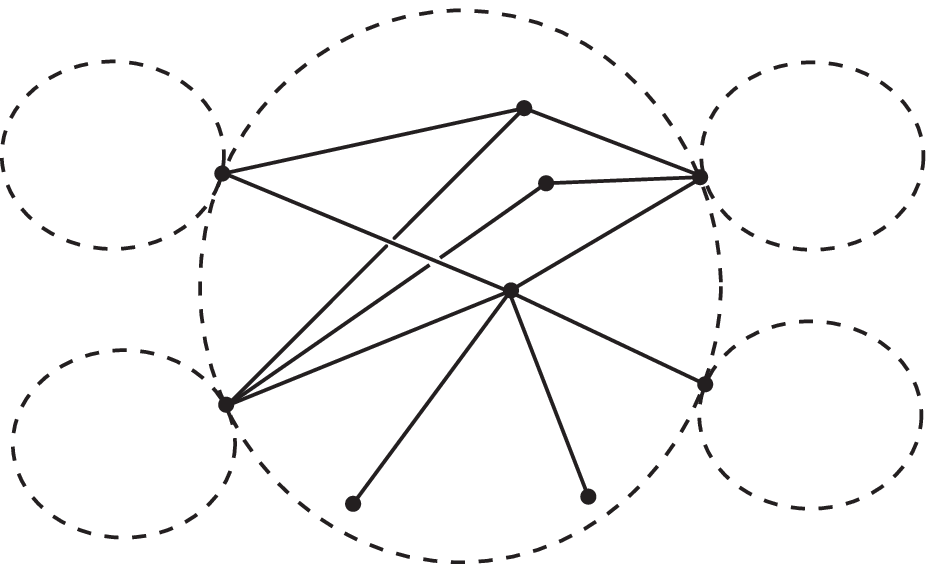}
\end{center}
\caption{A hub $v$} 
\end{figure}

\begin{theorem}\label{LBgeneral} Let  $\pi$ be the number of pieces of $\G$, $\nu$ the number of nodes in $\G$ and $\nu_0$  the number of nodes in $\G_0$.  Then
 $$\hbox{\sc vcd}(\out(\AG))\geq (\pi-1)+2(\nu - \nu_0)-2$$ 
If some node of $\G_0$ is not a hub, then
 $$\hbox{\sc vcd}(\out(\AG))\geq (\pi-1)+2(\nu - \nu_0)-1.$$ 
If some edge of $\G_0$ has no endpoints which are hubs, then 
 $$\hbox{\sc vcd}(\out(\AG))\geq (\pi-1)+2(\nu - \nu_0).$$ 
 \qed
\end{theorem}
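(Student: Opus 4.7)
The plan is to exhibit $G(e_0,T_0)$ as a free abelian subgroup of $\out(\AG)$ whose rank matches the claimed bound; since $\out(\AG)$ contains a torsion-free subgroup of finite index (by \cite{ChVo}), any such subgroup bounds $\vcd(\out(\AG))$ from below. The preceding lemma already gives that the listed generators commute, so $G(e_0,T_0)$ is abelian; what remains is counting the generators and checking independence modulo inner automorphisms.

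First I would count. Any node strictly dominating a separating vertex $v$ would be adjacent to all of $lk(v)$ and therefore bridge the components that $v$ separates, a contradiction; hence every separating node of $\G$ lies in $\G_0$, and the type (1) generators are indexed by pairs $(v,C)$ with $v$ a separating node and $C$ a component of $\G\setminus\{v\}$ not containing $\wplus$. These pairs are in natural bijection with the edges of the bipartite ``piece tree'' (whose vertices are the pieces and the separating nodes of $\G$), of which there are $\pi-1$. The type (2) and type (3) families furnish $2\ell$ and $2(\nu-\nu_0-\ell)$ transvections respectively, so in all there are $(\pi-1)+2(\nu-\nu_0)$ generators, and each acts by a distinct elementary operation on a distinct source node or component, so they are linearly independent in $\aut(\AG)$.

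Next I would analyze relations. A relation in $\out(\AG)$ amounts to an identity $\iota_g\in G(e_0,T_0)$ for some $g\in\AG$. Expanding $\iota_g$ on each node and matching against our generators, two conditions emerge. First, the partial-conjugation part of $\iota_g$ must be built from type (1) generators, whose conjugators are always of the form $\wplus$ along $T_0$ oriented toward $e_0$; this combinatorial constraint forces $g\in\{v_0,w_0\}$. Second, the transvection part of $\iota_g$ on $\Delta(g)$ must be realizable by type (2)/(3) generators, which requires every node of $\Delta(g)$ to be either adjacent to $g$ or dominated by $g$, i.e., $g$ must be a hub. Because $v_0$ and $w_0$ commute but are otherwise independent modulo the center of $\AG$, the intersection $G(e_0,T_0)\cap\text{Inn}(\AG)$ has rank equal to the number of hubs among $\{v_0,w_0\}$. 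Subtracting this rank gives the generic bound $(\pi-1)+2(\nu-\nu_0)-2$.

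To obtain the improved bounds I would choose $e_0$ and $T_0$ adaptively. If some $u\in\G_0$ is not a hub, I would pick $T_0$ so that $u$ is an endpoint of $e_0$; then at most one of $v_0,w_0$ is a hub, so at most one inner-automorphism relation is lost, improving the bound by $1$. If further some edge of $\G_0$ has no hub endpoints at all, I would take that edge as $e_0$; then no inner automorphisms lie in $G(e_0,T_0)$ and the bound improves by another $1$. The main obstacle is the third paragraph's analysis: carefully tracing how any candidate $\iota_g$ decomposes through our generators and showing that the orientation of $T_0$ together with the hub condition are the exact obstructions, so that no hidden relations can inflate the rank drop beyond what the hub count predicts.
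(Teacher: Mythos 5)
Your proposal is correct and follows essentially the same route as the paper: it uses the rank of the free abelian subgroup $G(e_0,T_0)$, counts $(\pi-1)+2(\nu-\nu_0)$ generators, observes that every generator fixes $v_0$ and $w_0$ so that only conjugations by elements of $\langle v_0,w_0\rangle$ can appear as inner automorphisms, and uses the hub criterion together with an adaptive choice of $e_0$ to control how many are lost. The only quibbles are cosmetic: the type (1) generators number $\sum_{v}(\delta_C(v)-1)=\pi-1$ (pieces other than the one containing $e_0$, not edges of the block--cut tree), and the conjugating element should be constrained to lie in $\langle v_0,w_0\rangle$ rather than in $\{v_0,w_0\}$.
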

 
 \begin{proof}
 The rank of $G=G(T_0,e_0)$ gives a lower bound for the VCD of $\out(\AG)$.  In general, the generators are linearly independent so $G$ has rank $(\pi-1)+ 2(\nu-\nu_0)$, where $\pi$ is the number of pieces of $\G$, $\nu$ is the number of nodes in $\G$ and $\nu_0$ the number of nodes in $\G_0$.  
However, in certain circumstances $G$ may contain as many as two independent inner automorphisms.   Note that every generator of $G$ acts trivially on the endpoints $v_0$ and $w_0$ of $e_0$.  Thus, the only inner automorphisms which can occur are conjugations by an element of $\langle v_0, w_0\rangle$.

Suppose that an  endpoint  $v_0$ of $e_0$ is a hub. Then the generators of $G$ include conjugation of any component of  $\G-\Delta(v_0)$ by $v_0$, as well as left and right transvections of $v_0$ onto any node of  $\Delta(v_0)$ which is not adjacent to $v_0$.  Thus the subgroup generated by these automorphisms includes conjugation by $v_0$.  If $v_0$ is not a hub, then some piece containing $v_0$ also contains a node which cannot be conjugated in $G$ by $v_0$.  

 \end{proof}
We record some special cases as corollaries:
 
 \begin{corollary}\label{LBtree} If $\G$ is a tree, then 
  $\hbox{\sc vcd}(\out(\AG))\geq e+2\ell-3,$ where $e$ is the number of edges and $\ell$ is the number of leaves.
\end{corollary}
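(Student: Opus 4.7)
The plan is to apply Theorem~\ref{LBgeneral} directly, substituting the combinatorial invariants that are forced when $\G$ is a tree. I will need three elementary observations about trees: a count of pieces, an identification of $\G_0$, and an observation about hubs.

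First, I would determine $\pi$. Since a tree contains no cycles at all, no subgraph on three or more vertices can be $2$-connected (any path of length $\geq 2$ has a separating interior node). Hence the maximal $2$-connected subgraphs of $\G$ are precisely its edges, so $\pi=e$.

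Second, I would identify $\G_0$ and compute $\nu-\nu_0$. A tree is in particular triangle- and square-free, so by the remark in the paper $\G_0$ is obtained from $\G$ by pruning off all leaves. Therefore the nodes of $\G$ that are not in $\G_0$ are exactly the $\ell$ leaves, giving $\nu-\nu_0=\ell$.

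Third, I would check which of the three inequalities in Theorem~\ref{LBgeneral} applies. The paper observes that in a tree $\Delta(v)=\operatorname{st}(v)$ for every interior node $v$, meaning every node in $\Delta(v)$ is adjacent to $v$; hence every interior node of a tree is a hub. In particular every node of $\G_0$ is a hub, so neither the $+1$ nor the $+2$ improvement in Theorem~\ref{LBgeneral} is available, and we must use the first (weakest) bound. Substituting $\pi=e$ and $\nu-\nu_0=\ell$ into
\[
\vcd(\out(\AG))\geq (\pi-1)+2(\nu-\nu_0)-2
\]
gives $(e-1)+2\ell-2=e+2\ell-3$, as claimed.

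There is no real obstacle here; the corollary is a direct specialization of the general lower bound, and the only content is verifying that the tree hypothesis forces us into the first case of the theorem (because every interior node of a tree is automatically a hub, so the subgroup $G(e_0,T_0)$ contains the two inner automorphisms coming from the hubs $v_0$ and $w_0$, which must be quotiented out in computing the rank of the image in $\out(\AG)$).
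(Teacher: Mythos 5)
Your proof is correct and follows the same route as the paper: both identify the pieces of a tree with its edges (so $\pi=e$), note that $\G_0$ is obtained by pruning the leaves (so $\nu-\nu_0=\ell$), and then specialize the first estimate of Theorem~\ref{LBgeneral}. Your additional verification that every interior node of a tree is a hub (hence only the weakest of the three bounds is available) is exactly the observation the paper makes just before stating the theorem.
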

\begin{proof}
If $\G$ is a tree, then the pieces are the edges of $\G$ and $\G_0$ is obtained by removing the leaves of $\G$, so the first estimate in Theorem~\ref{LBgeneral} gives the result.
\end{proof}

If $\G_0$ contains a cycle, then no node in that cycle can be a hub, so the third estimate in Proposition~\ref{LBgeneral} applies.  If $\G$ has no triangles or squares but is not a tree, then $\G_0$ is obtained from $\G$ by pruning off all leaves, and
we are in this situation.  More generally, we have:

\begin{corollary}\label{LBnosquares}  If $\G_0$ is obtained from $\G$ by pruning its leaves, but $\G$ is not a tree, then 
  $ \hbox{\sc vcd}(\out(\AG))\geq \pi+2\ell-1,$ where $\pi$ is the number of pieces and $\ell$ is the number of leaves.
\end{corollary}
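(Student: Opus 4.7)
The plan is to apply the third, strongest estimate of Theorem~\ref{LBgeneral}, which requires exhibiting an edge $e_0$ of $\G_0$ whose two endpoints are both non-hubs. Granted such an $e_0$, the conclusion is automatic: because $\G_0$ is obtained from $\G$ by deleting only the leaves, we have $\nu - \nu_0 = \ell$, and the estimate reads $\vcd(\out(\AG)) \geq (\pi-1) + 2\ell = \pi + 2\ell - 1$.

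To produce such an $e_0$, I would first note that pruning leaves cannot destroy a cycle, so the hypothesis that $\G$ is not a tree yields a cycle $C \subseteq \G_0$. Since $\G$ has no triangles, the length of $C$ is at least $4$. Pick any edge $e_0$ of $C$. I claim that neither endpoint of $e_0$ is a hub. Indeed, let $v$ be such an endpoint. The cycle $C$ is $2$-connected, so it lies in a single piece of $\G$, and hence $C \subseteq \Delta(v)$. Because $C$ has length at least $4$, there is some node $u \in C$ which is not adjacent to $v$. Moreover, $u$ is a vertex of $\G_0$, which means it is the chosen representative of a maximal equivalence class of the link-ordering; in particular $u \not\leq v$, since otherwise $u$ would either fail to be maximal (if $lk(u) \subsetneq lk(v)$) or would coincide with $v$ (if $lk(u) = lk(v)$, as each maximal class contributes a single representative). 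Thus $u \in \Delta(v)$ is neither adjacent to $v$ nor satisfies $u \leq v$, showing that $v$ is not a hub.

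With the edge $e_0$ in hand, the third inequality of Theorem~\ref{LBgeneral} gives the desired lower bound. I do not anticipate any real obstacle here; the only subtlety is the verification that a node sitting on a cycle of $\G_0$ cannot be a hub, which is a direct consequence of the triangle-free hypothesis (forcing cycle length $\geq 4$) together with the fact that distinct vertices of $\G_0$ are pairwise incomparable in the link order.
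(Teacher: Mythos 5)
Your proposal is correct and follows the same route as the paper: the paper likewise observes that a cycle of $\G$ survives pruning into $\G_0$, that no node on a cycle of $\G_0$ can be a hub, and then applies the third estimate of Theorem~\ref{LBgeneral} with $\nu-\nu_0=\ell$. Your verification that a cycle node is not a hub (using the triangle-free hypothesis and the pairwise incomparability of the chosen representatives in $\G_0$) simply fills in a step the paper asserts without proof.
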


\begin{corollary}\label{LBzero} If $\chi(\G)=0$ and the unique simple cycle $C$ of $\G$ has length $k\geq 5$, then 
$\hbox{\sc vcd}(\out(\AG))\geq  e-k+ 2\ell,$  where $e$ is the number of edges of $\G$ and $\ell$ is the number of leaves.
\end{corollary}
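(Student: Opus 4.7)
The plan is to apply Corollary~\ref{LBnosquares} directly, after verifying its hypotheses and counting the number $\pi$ of pieces of $\G$.

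First I would argue that $\G$ contains neither triangles nor squares. Since $\chi(\G)=0$ and $\G$ is connected, $\G$ has exactly one independent cycle, which must be the unique simple cycle $C$ of length $k\geq 5$; any further triangle or square would provide a second independent cycle. A short link-comparison argument then shows that $\G_0$ is obtained from $\G$ by pruning its leaves. Indeed, for two distinct interior nodes $v,w$, the containment $lk(v)\subseteq lk(w)$ forces either a self-loop (if $v,w$ are adjacent, since then $w\in lk(v)\subseteq lk(w)$) or a 4-cycle through two common neighbors of $v$ and $w$ (if they are nonadjacent, using $|lk(v)|\geq 2$); neither is possible in $\G$. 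So every interior node is its own maximal singleton class, and since $\G$ is not a star no leaf is maximal. Therefore $\G_0$ is exactly $\G$ with its leaves removed; and $\G$ is not a tree, since it contains $C$. Both hypotheses of Corollary~\ref{LBnosquares} hold.

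Next I would count pieces. The cycle $C$ is itself a maximal 2-connected subgraph and hence contributes one piece. Every other edge of $\G$ lies in no cycle at all (else $\G$ would contain a second independent cycle) and is therefore a bridge, contributing its own piece. Thus $\pi = 1 + (e-k)$, and Corollary~\ref{LBnosquares} gives
\[\hbox{\sc vcd}(\out(\AG)) \;\geq\; \pi + 2\ell - 1 \;=\; (e-k+1) + 2\ell - 1 \;=\; e-k+2\ell,\]
as required.

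I do not expect any genuine obstacle here: the whole argument is a bookkeeping reduction to the previously established corollary, and the only steps requiring a brief check are the triangle- and square-freeness of $\G$, the identification of $\G_0$ with the leaf-pruned graph, and the count of pieces in terms of $e$ and $k$.
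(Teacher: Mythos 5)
Your proposal is correct and follows essentially the same route as the paper: identify the unique cycle $C$ as one piece, observe that every remaining edge is a bridge so that $\pi = e-k+1$, and substitute into Corollary~\ref{LBnosquares}. The extra verification that $\G$ has no squares and that $\G_0$ is the leaf-pruned graph is sound and simply makes explicit what the paper records in the discussion preceding Corollary~\ref{LBnosquares}.
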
 
\begin{proof} Since $\G$ has Euler characteristic $0$, it contains a unique simple cycle $C$ with $k$ sides, which constitutes one of its pieces.  The other pieces are single edges, contained in trees attached to the nodes of $C$, so the number of pieces is $e-k+1$.  Plugging this into Corollary~\ref{LBnosquares} gives the result.    
\end{proof}

\subsection{Upper bound} To get an upper bound on the VCD, we will use the projection homomorphisms  defined in \cite{CCV}.  As above, we assume $\G$ is connected and triangle-free.  Let  $\aut^0(\AG)$ be the subgroup of $\aut(\AG)$ generated by inversions, partial conjugations and transvections, and  let $\out^0(\AG)$ be its image in $\out(\AG)$. 
 Then  for  each node $v$ of $\G_0$, there is a homomorphism $P_v\colon \out^0(\AG)\to \out(F(L_v))$ where $F(L_v)$ is the free group on the nodes adjacent to $v$.  These are assembled to give a homomorphism 
\[
P\colon \out^0(\AG) \to \prod_{v \in V_0} \out(F(L_v)).
\]

The homomorphism $P$ is not onto; in fact the image of each $P_v$ lies in a subgroup of $\out(F(L_v))$ which we now describe.  
 
\begin{definition} If $N$ is a finite set and $K$ is a subset, we denote by $P\Sigma(N,K)$ the subgroup of $\out(F\langle N\rangle)$  generated by automorphisms which send each element of $K$ to a conjugate of itself. 
\end{definition}
  If $K$ is not empty,  we will prove in the following section that the VCD of $P\Sigma(N,K)$ is equal to $2n-k-2$, where $n$ is the cardinality of $N$ and $k$ is the cardinality of $K$ (Theorem~\ref{PSigma}).
If $K=\emptyset$, then $P\Sigma(N,\emptyset)=\out(F\langle N\rangle)$, which has VCD equal to $2n-3$ \cite{CuVo}.

Let $V_0$ be the nodes in $\G_0$ and let $U\subseteq V_0$ be the set of nodes of $\G$  which are maximal and unique in their equivalence class.  For each node $v$,   set $|v|=|L_v|=$ the valence of $v$, and   $|v|_U = |L_v\cap U|$.

 \begin{theorem}\label{UBgeneral} For any connected graph $\G$ with no triangles,
 $$\hbox{\sc vcd}(\out(\AG))\leq (\pi-1) +\sum_{v\in V_0}\big( 2|v|-3 - \max\{|v|_U-1, \, 0\}\big),$$
 where $\pi$ the number of pieces in $\G$.   
\end{theorem}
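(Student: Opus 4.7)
The plan is to pass to the finite-index subgroup $\out^0(\AG)$, which has the same VCD as $\out(\AG)$, and to analyze it via the projection
$$P\colon \out^0(\AG)\to \prod_{v\in V_0}\out(F(L_v))$$
recalled above. The standard inequality $\vcd(G)\leq \vcd(N)+\vcd(G/N)$ for an extension of groups with finite VCD reduces the problem to bounding the VCD of the image and of the kernel of $P$ separately, then combining via the short exact sequence $1\to \ker P\to \out^0(\AG)\to P(\out^0(\AG))\to 1$.

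For each factor of the image, the key claim is that $P_v$ lands in $P\Sigma(L_v, K_v)$ with $K_v = L_v\cap U$. The reason is that each $u\in U$ is maximal and unique in its equivalence class, so for any other node $w$ of $\Gamma$ one does not have $u\leq w$; thus no transvection can send $u$ to anything other than a conjugate of itself, and partial conjugations send every node to a conjugate of itself by definition. Applying Theorem~\ref{PSigma} (and the classical $\vcd(\out(F_n))=2n-3$ of \cite{CuVo} when $K_v=\emptyset$) gives the per-vertex bound $2|v|-3-\max\{|v|_U-1,0\}$, which sums over $V_0$ to the second term of the stated bound.

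The main technical obstacle is to show $\vcd(\ker P)\leq \pi - 1$. Elements of $\ker P$ act trivially on each link $L_v$ for $v\in V_0$, so modulo inner automorphisms they are composed of partial conjugations that shuffle entire pieces of $\Gamma$ at separating vertices. Thus $\ker P$ is (virtually) free abelian with generators naturally indexed by the piece decomposition of $\Gamma$: each separating vertex contributes as many independent piece-conjugations as there are pieces through it, but these share relations across vertices and are cut down overall by one via the inner-automorphism relation, yielding rank at most $\pi - 1$. I would verify this rigorously by combining the commutation analysis of the lower-bound lemma with the Whitehead-automorphism framework from \cite{CCV} and \cite{ChVo}, which controls the relations among partial conjugations in terms of the piece structure. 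Once this bound is in hand, the extension inequality yields the stated upper bound.
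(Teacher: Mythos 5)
Your proposal is correct and follows essentially the same route as the paper: project via $P$, bound each image factor by Theorem~\ref{PSigma} (or $2|v|-3$ when $L_v\cap U=\emptyset$) using the observation that nodes of $U$ admit no transvections onto them, and add the VCD of the kernel via the extension inequality. The only point to tighten is the kernel count, which is already in the reference you invoke: Proposition 3.11 of \cite{CCV} shows $\ker P$ is free abelian of rank $\sum_{v\in V_0}(\delta_C(v)-1)$, where $\delta_C(v)$ is the number of pieces through $v$ (so the ``minus one'' occurs at each vertex, not once overall as your heuristic suggests), and an induction on the number of pieces shows this sum equals $\pi-1$.
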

\begin{proof}
 By Proposition 3.11 of \cite{CCV}, the kernel $K_P$ of $P$ is free abelian of rank 
$$\sum_{v \in V_0}  (\delta_C(v) -1)$$
where $\delta_C(v)$ denotes the number of connected components of $\G-\{v\}$, or equivalently, the number of pieces containing $v$.  An easy induction argument on the number of pieces in $\G$ then shows that this sum equals $\pi-1$.

 If $v\in U$,  there are no transvections onto $v$, so any automorphism of $\aut^0(\AG)$ sends $v$ to a conjugate of itself. Thus   the image of $P$ lies in the product of the groups $P\Sigma(L_v,L_v\cap U)$, $v \in V_0$.  By Theorem \ref{PSigma}, these groups have VCD  equal to $2|v|-|v|_U-2$ if $L_v\cap U$ is not empty, and $2|v|-3$ if $L_v\cap U$ is empty.  Adding these to the VCD of the kernel of $P$ gives the upper bound of the theorem.
\end{proof}

If $U=V_0$, the statement simplifies as follows:

\begin{corollary}\label{UBnext} Let $\G$ be connected with no triangles, and assume $U=V_0$, i.e. every maximal vertex is unique in its equivalence class.
Then
 $$\hbox{\sc vcd}(\out(\AG))\leq (\pi-1) +2\big(\sum_{v\in V_0} |v| \big) - 2|V_0|-2|E_0|,$$
where $V_0$ and $E_0$ are the sets of nodes and edges  in $\G_0$  and $\pi$ is the number of pieces in $\G$.   

\end{corollary}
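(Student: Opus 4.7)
The proof will be a direct calculation starting from the bound of Theorem~\ref{UBgeneral}, which reads
$$\hbox{\sc vcd}(\out(\AG))\leq (\pi-1) +\sum_{v\in V_0}\big( 2|v|-3 - \max\{|v|_U-1,\,0\}\big),$$
so the task is to simplify the sum on the right under the hypothesis $U=V_0$.

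First I would unpack the quantity $|v|_U$. By definition $|v|_U = |L_v\cap U|$, and under the assumption $U=V_0$ this equals $|L_v\cap V_0|$, the number of neighbors of $v$ lying in $\G_0$, which is precisely the degree of $v$ in the subgraph $\G_0$. Recall that $\G_0$ was defined earlier to be connected; assuming the non-degenerate case $|V_0|\geq 2$ (the case $|V_0|=1$ can be noted separately), connectedness forces every $v\in V_0$ to have at least one neighbor in $\G_0$, i.e.\ $|v|_U\geq 1$. Consequently $\max\{|v|_U-1,\,0\} = |v|_U - 1$ for every $v\in V_0$, and the max can be removed.

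Next I would use the handshake lemma applied to the graph $\G_0$, which gives
$$\sum_{v\in V_0}|v|_U = 2|E_0|,$$
since $|v|_U$ is just the $\G_0$-degree of $v$. Substituting this and the identity $\max\{|v|_U-1,0\} = |v|_U-1$ into the bound of Theorem~\ref{UBgeneral} yields
$$\sum_{v\in V_0}\bigl(2|v|-3 - (|v|_U-1)\bigr) = 2\sum_{v\in V_0}|v| - 2|V_0| - \sum_{v\in V_0}|v|_U = 2\sum_{v\in V_0}|v| - 2|V_0| - 2|E_0|.$$
Adding the kernel contribution $(\pi-1)$ recovers exactly the expression in the statement of Corollary~\ref{UBnext}.

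There is no real obstacle here; the corollary is a straightforward simplification of Theorem~\ref{UBgeneral}. The only point that requires a moment's care is justifying that the $\max$ always takes the value $|v|_U-1$ rather than $0$, which is where the connectedness of $\G_0$ (and the tacit assumption $|V_0|\geq 2$) enters. Everything else is bookkeeping via the handshake lemma.
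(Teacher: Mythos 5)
Your proof is correct and follows essentially the same route as the paper: the paper likewise observes that $U=V_0$ together with the connectedness of $\G_0$ forces $L_v\cap U$ to be non-empty, so the $\max$ reduces to $|v|_U-1=|v|_0-1$, and then sums the $\G_0$-valences to get $2|E_0|$. Nothing further is needed.
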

\begin{proof}Let $|v|_0$ denote the valence of $v$ in $\G_0$.  Because $U=V_0$ and  $\G_0$ is connected, $L_v\cap U$ is non-empty for all $v\in V_0$.  Thus, the latter terms  in the formula from Theorem~\ref{UBgeneral} become

\begin{align*}
\sum_{v\in V_0} \big(2|v|-3 - \max\{|v|_U-1, \, 0\}\big) &= \sum_{v\in V_0}\big( 2|v|-3 - |v|_0+1 \big)\\
&= 2\big(\sum_{v\in V_0} |v| \big) - 2|V_0|- 2|E_0|.
\end{align*}
\end{proof}

If $\G_0$ is obtained from $\G$ by pruning leaves, then $U=V_0$ since $V_0$ contains only one representative from each maximal equivalence class.   This holds, for instance, if $\G$ has no cycles of length less than $5$.  The statement in this case becomes even simpler. 

\begin{corollary}\label{UBthree} Let $\G$ be connected with no triangles. Assume $\G_0$ is obtained from $\G$ by pruning all leaves. Then
 $$\hbox{\sc vcd}(\out(\AG))\leq (\pi-1) + 2\big(\ell-\chi(\G)\big),$$
where $\ell$ is the number of leaves in $\G$ and $\pi$ is the number of pieces.    
\end{corollary}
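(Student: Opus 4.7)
The plan is to derive this directly from Corollary~\ref{UBnext}, whose hypothesis $U=V_0$ is automatically met: when $\G_0$ arises from $\G$ by pruning leaves, each node of $V_0$ is an interior node of $\G$, hence maximal in the partial order, and by construction $V_0$ contains one representative from each maximal equivalence class, making each such node unique in its class.

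From there the proof reduces to a short counting exercise. Since $\G$ is obtained from $\G_0$ by attaching $\ell$ pendant edges (one per leaf), we have $|V|=|V_0|+\ell$ and $|E|=|E_0|+\ell$, so
$$\chi(\G) = |V|-|E| = |V_0|-|E_0|.$$
Next I would compute $\sum_{v\in V_0}|v|$ (valence being taken in $\G$) by a handshake argument: each edge of $\G_0$ has both endpoints in $V_0$ and contributes $2$, while each leaf-edge has exactly one endpoint in $V_0$ and contributes $1$. Hence
$$\sum_{v\in V_0}|v| \;=\; 2|E_0| + \ell.$$
Plugging into the bound of Corollary~\ref{UBnext} gives
$$2\!\sum_{v\in V_0}|v|\; - 2|V_0| - 2|E_0| \;=\; 2(2|E_0|+\ell) - 2|V_0| - 2|E_0| \;=\; 2\ell + 2(|E_0|-|V_0|),$$
and substituting $|E_0|-|V_0| = -\chi(\G)$ yields exactly $2(\ell-\chi(\G))$. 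Adding the kernel contribution $\pi-1$ produces the claimed upper bound.

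There isn't really a hard step here; the argument is purely combinatorial bookkeeping once Corollary~\ref{UBnext} is in hand. The only thing to be careful about is keeping track of which valence is being used in the formula from Corollary~\ref{UBnext} — it is the valence $|v|=|L_v|$ in $\G$, not in $\G_0$ — since that is what forces the leaf-edges to contribute to the sum and ultimately gives the $2\ell$ term.
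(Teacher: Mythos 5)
Your proposal is correct and follows essentially the same route as the paper: both deduce the bound from Corollary~\ref{UBnext} (with $U=V_0$ justified by the pruning hypothesis) and then do the same handshake-style count, the only cosmetic difference being that you evaluate $\sum_{v\in V_0}|v|=2|E_0|+\ell$ directly while the paper extends $\sum_{v\in V_0}(|v|-1)$ to a sum over all of $V$ using that leaves contribute zero.
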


\begin{proof} Since all vertices not in $\G_0$ are leaves, the latter terms in the formula in Corollary~\ref{UBnext} become
\begin{align*}
 2\sum_{v\in V_0} |v| - 2|V_0|-2|E_0| &=  2\sum_{v\in V_0}( |v| - 1)-2|E_0|\\
&= 2\sum_{v\in V}( |v| - 1) - 2|E_0|\\
&=2\big(|E|-\chi(\G)- |E_0|\big)\\
&=2(\ell-\chi(\G)).
\end{align*}
\end{proof}

If $\G$ is a tree, then $\pi=e$ and $\G$  has Euler characteristic $1$, so our upper bound agrees with the lower bound given in Corollary~\ref{LBtree}: 

\begin{theorem}\label{Tree} If $\G$ is a tree, then
$\hbox{\sc vcd}(\out(\AG))= e+2\ell-3,$  where $e$ is the number of edges of the tree and $\ell$ is the number of leaves.
\end{theorem}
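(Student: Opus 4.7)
The plan is to observe that the tree case is precisely the point where the general lower bound established in Corollary~\ref{LBtree} and the upper bound from Corollary~\ref{UBthree} collide, so the theorem should follow by substituting tree-specific combinatorial data into each side.

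First I would recall that Corollary~\ref{LBtree} already gives $\vcd(\out(\AG)) \geq e + 2\ell - 3$ directly, with no further work needed on the lower side. Then I would verify that a tree $\G$ satisfies the hypotheses of Corollary~\ref{UBthree}: it is connected, triangle-free (vacuously, since it has no cycles at all), and $\G_0$ is obtained from $\G$ by pruning its leaves, as was remarked in the construction of $\G_0$. Next I would identify the two invariants appearing in the upper bound. Since the $2$-connected subgraphs of a tree are exactly its edges, the number of pieces is $\pi = e$. And since a tree is contractible, $\chi(\G) = 1$.

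Substituting these values into the bound from Corollary~\ref{UBthree} gives
\[
\vcd(\out(\AG)) \leq (e - 1) + 2(\ell - 1) = e + 2\ell - 3,
\]
which matches the lower bound and forces equality. There is essentially no obstacle at this stage: the real work has already been absorbed into Corollaries~\ref{LBtree} and \ref{UBthree}, and the tree hypothesis is exactly strong enough to collapse both estimates to the same number. The only point that would merit a line of justification in a fully written proof is the identification $\pi = e$ for a tree, but this is immediate from the definition of a piece as a maximal $2$-connected subgraph together with the fact that every edge of a tree is a separating edge.
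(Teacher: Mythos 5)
Your proposal is correct and follows exactly the paper's own (very brief) argument: the paper likewise obtains the theorem by observing that for a tree $\pi=e$ and $\chi(\G)=1$, so the upper bound of Corollary~\ref{UBthree} collapses to $e+2\ell-3$ and meets the lower bound of Corollary~\ref{LBtree}. Your extra remarks verifying the hypotheses of Corollary~\ref{UBthree} and the identification $\pi=e$ are accurate and consistent with the paper's standing assumption that $\G$ is not a star.
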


If $\G$ is not a tree, but has no cycles of length less than 5, the lower bound from Corollary~\ref{LBnosquares} and upper bound from Corollary~\ref{UBthree} combine to give

\begin{theorem}\label{NoShortCycles}   If $\G$ is connected with no triangles or squares but is not a tree, the VCD of $\out(\AG)$ satisfies  $$ \pi  + 2\ell-1 \leq\hbox{\sc vcd}(\out(A_\G)) \leq \pi+2\ell-1-2\chi(\G),$$  where $\ell$ is the number of leaves of $\G$, $\pi$ is the number of maximal 2-connected subgraphs, and $\chi(\G)$ is the Euler characteristic of $\G$.  
\end{theorem}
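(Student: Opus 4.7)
The plan is to derive this statement by directly combining the lower bound from Corollary~\ref{LBnosquares} with the upper bound from Corollary~\ref{UBthree}. Both of these corollaries have as their hypothesis that $\G_0$ is obtained from $\G$ by pruning all leaves, so the only preparatory step is to verify that this hypothesis is satisfied under the assumption that $\G$ is connected, triangle-free, square-free, and not a tree.

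Recall that $\G_0$ is the subgraph of $\G$ spanned by a choice of one representative from each maximal equivalence class of nodes under link-containment. If $\G$ has no triangles or squares, then two distinct interior nodes cannot have the same link: a pair of common neighbors would yield a square, while a single common neighbor would make one of the nodes a leaf of the other's link rather than an interior node with comparable link. Similarly, each leaf $u$ has link strictly contained in the link of its unique neighbor $v$ (again, equality would require a square since $v$ has at least one other neighbor, $\G$ not being a star). Consequently every interior node of $\G$ is alone in its equivalence class and is maximal, while every leaf is strictly smaller than its neighbor. This is exactly the observation already made in Section~2, and it identifies $\G_0$ with the graph obtained from $\G$ by pruning off all leaves.

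Once this identification is in place, the hypotheses of Corollary~\ref{LBnosquares} and Corollary~\ref{UBthree} are both met. Corollary~\ref{LBnosquares} gives
\[
\hbox{\sc vcd}(\out(\AG)) \geq \pi + 2\ell - 1,
\]
and Corollary~\ref{UBthree} gives
\[
\hbox{\sc vcd}(\out(\AG)) \leq (\pi-1) + 2(\ell - \chi(\G)) = \pi + 2\ell - 1 - 2\chi(\G).
\]
Concatenating the two inequalities yields the statement.

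Since the heavy lifting---the construction of the free abelian subgroup $G(e_0, T_0)$, the projection homomorphism $P$, the computation of $\hbox{\sc vcd}(P\Sigma(N,K))$, and the kernel rank calculation---has already been carried out in the earlier corollaries and in Theorem~\ref{PSigma}, there is no substantive obstacle in this final assembly. The only point requiring any thought is the structural remark identifying $\G_0$ with the pruning of $\G$, and this was already foreshadowed in the introductory discussion of $\G_0$.
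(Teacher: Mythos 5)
Your proposal is correct and is exactly the paper's argument: Theorem~\ref{NoShortCycles} is presented there as the immediate combination of the lower bound of Corollary~\ref{LBnosquares} with the upper bound of Corollary~\ref{UBthree}, the identification of $\G_0$ with the leaf-pruned graph having already been recorded in Section~2. One small correction to your preparatory verification: a leaf $u$ with neighbor $v$ has $lk(u)=\{v\}$, which is \emph{not} contained in $lk(v)$ (a node never lies in its own link); the correct comparison showing $u$ is non-maximal is $lk(u)\subsetneq lk(w)$ for some non-leaf neighbor $w$ of $v$, which exists because $\G$ is connected and not a star.
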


 If $\G$ has Euler characteristic $0$ and no squares, this gives:
 
\begin{theorem}\label{cycle} If $\chi(\G)=0$ and the unique simple cycle $C$ of $\G$ has length $k\geq 5$, then 
$$\hbox{\sc vcd}(\out(\AG))= e-k+2\ell.$$ 
\end{theorem}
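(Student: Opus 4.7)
The plan is to sandwich $\vcd(\out(\AG))$ between the lower bound produced in Corollary~\ref{LBzero} and the upper bound produced in Theorem~\ref{NoShortCycles}, and observe that under the hypotheses of the present theorem the two bounds agree. First I would verify that the hypotheses of Theorem~\ref{NoShortCycles} are met: since $\chi(\G)=0$ the graph $\G$ contains a unique simple cycle, and that cycle has length $k\geq 5$ by assumption, so $\G$ is triangle- and square-free and, having a cycle, is not a tree. Hence the upper bound
\[
\vcd(\out(\AG))\leq \pi+2\ell-1-2\chi(\G)
\]
of Theorem~\ref{NoShortCycles} applies, and the $\chi(\G)=0$ assumption collapses it to $\pi+2\ell-1$.

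Next I would identify the number of pieces. As already noted in the proof of Corollary~\ref{LBzero}, the unique simple cycle $C$ is itself one piece, while every other edge of $\G$ lies in a tree hanging off $C$ and thus constitutes a piece on its own. Counting gives $\pi=e-k+1$, so the upper bound simplifies to
\[
\pi+2\ell-1 \;=\; (e-k+1)+2\ell-1 \;=\; e-k+2\ell.
\]
This matches the lower bound $\vcd(\out(\AG))\geq e-k+2\ell$ from Corollary~\ref{LBzero}, and the two inequalities together give the claimed equality. There is no real obstacle here; the theorem is essentially a bookkeeping consequence of the two bounding results already in hand, with the key ingredient being the simple edge count $\pi=e-k+1$ that exploits the Euler characteristic zero hypothesis.
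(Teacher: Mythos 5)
Your proposal is correct and follows exactly the route the paper takes: Theorem~\ref{cycle} is stated in the paper as an immediate consequence of combining the bounds of Theorem~\ref{NoShortCycles} (equivalently, Corollaries~\ref{LBzero} and \ref{UBthree}) under the hypothesis $\chi(\G)=0$, with the piece count $\pi=e-k+1$ already carried out in the proof of Corollary~\ref{LBzero}. Nothing is missing.
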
 

Theorems \ref{NoShortCycles} and \ref{cycle} also hold if the no squares condition is replaced by the requirement that $\G_0$ be obtained by pruning leaves.  For example, Theorem \ref{cycle}  holds for a square with a tree attached to each of its vertices.

%The kernel $K_P$ has rank $\sum_{V_0}\delta_C(v)-1$, where $\delta_C(v)$ is the number of connected components of $\G-v$;  this is the same as  $\pi-1$, where $\pi$ is  the number of pieces of $\G$.  
%(Note that it's NOT the number of components of $\G-V^{sep}$: take, e.g. three subdivided theta graphs in a row).

\subsection{Further finiteness properties for tree-based RAAGs}

 In the case that $\G$ is a tree, we can identify the image of $P$ precisely:

  \begin{theorem}  Assume $\G$ is a tree and hence $V_0$ is the set of non-leaf nodes of $\G$.  
Then the image of $P$ is the product
\[
 \prod_{v \in V_0} P\Sigma(L_v, L_v\cap V_0)
 \]
 \end{theorem}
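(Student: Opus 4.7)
The plan is to establish the two inclusions separately.

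For the inclusion that the image of $P$ is contained in $\prod_{v \in V_0} P\Sigma(L_v, L_v \cap V_0)$, the argument is essentially present in the proof of Theorem~\ref{UBgeneral}. Since $\G$ is a tree, every node of $V_0$ is interior, hence maximal and unique in its equivalence class, so $U = V_0$. As observed there, no generator of $\aut^0(\AG)$ transvects onto a maximal--unique node, so $P_v(\phi)(u)$ is a conjugate of $u$ in $F(L_v)$ for every $u \in L_v \cap V_0$, placing $P_v(\phi)$ in $P\Sigma(L_v, L_v \cap V_0)$.

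For the reverse inclusion, I would fix $v \in V_0$ and realize each standard generator $\sigma$ of $P\Sigma(L_v, L_v \cap V_0)$ by an element $\widetilde\sigma \in \aut^0(\AG)$ satisfying $P_v(\widetilde\sigma) = \sigma$ and $P_{v'}(\widetilde\sigma) = 1$ for every $v' \in V_0 \setminus \{v\}$. There are two types of generators to lift: (i) transvections $u \mapsto uw$, $u \mapsto wu$ and inversions $u \mapsto u^{-1}$ with $u \in L_v \setminus V_0$ a leaf of $\G$ adjacent to $v$ and $w \in L_v \setminus \{u\}$; and (ii) single-element partial conjugations $y \mapsto xyx^{-1}$ (other generators fixed) for $x \in L_v$ and $y \in L_v \setminus \{x\}$.

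For type (i), $\widetilde\sigma$ is the same operation performed in $\aut^0(\AG)$, which is a valid automorphism because $\text{lk}(u) = \{v\} \subseteq \text{st}(w)$; since $u$'s only neighbor is $v$, we have $u \notin L_{v'}$ for any $v' \neq v$, so $P_{v'}(\widetilde\sigma) = 1$. For type (ii), $\widetilde\sigma$ is the $\AG$-partial conjugation by $x$ on the component $C$ of $\G \setminus \text{st}(x)$ containing $y$. Because $\G$ is a tree, removing $\text{st}(x)$ isolates each element of $L_v \setminus \{x\}$ into its own component (any path between two distinct neighbors of $v$ must pass through $v \in \text{st}(x)$), so $\widetilde\sigma$ conjugates exactly $y$ by $x$ among the elements of $L_v$ and fixes the rest, yielding $P_v(\widetilde\sigma) = \sigma$.

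The main obstacle is verifying $P_{v'}(\widetilde\sigma) = 1$ for $v' \neq v$ in the type (ii) case, which requires a case analysis on the position of $v'$ relative to $x$. If $v' \in \text{st}(x)$, then triangle-freeness and the tree structure place each element of $L_{v'} \setminus \{x\}$ in a component of $\G \setminus \text{st}(x)$ disjoint from $C$, so $\widetilde\sigma$ fixes $L_{v'}$ pointwise. If $v' \in C$, the tree structure forces $L_{v'} \subseteq C$, so $\widetilde\sigma$ conjugates $v'$ and each element of $L_{v'}$ uniformly by $x$; the inner adjustment used to compute $P_{v'}$ cancels this, giving trivial action on $F(L_{v'})$. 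In all other cases $\widetilde\sigma$ fixes $v'$ and $L_{v'}$ entirely.
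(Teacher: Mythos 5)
Your containment direction is fine and matches the paper, which invokes Proposition 3.2 of \cite{CCV} for the fact that every element of $\out^0(\AG)$ carries each $v\in V_0$ to a conjugate of itself. Your verifications that each proposed lift $\widetilde\sigma$ lies in $\aut^0(\AG)$ and satisfies $P_v(\widetilde\sigma)=\sigma$ and $P_{v'}(\widetilde\sigma)=1$ for $v'\neq v$ are also correct: the tree and triangle-free hypotheses enter exactly where you use them (each neighbor of $v$ other than $x$ sits in its own component of $\G\setminus st(x)$, and a component $C$ containing $v'$ contains all of $L_{v'}$, so the conjugation by $x$ is absorbed by the inner adjustment in the definition of $P_{v'}$).

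The gap is at the first step of your surjectivity argument. You reduce to lifting ``each standard generator'' of $P\Sigma(L_v, L_v\cap V_0)$, but the paper defines $P\Sigma(N,K)$ as the full subgroup of $\out(F\langle N\rangle)$ consisting of outer automorphisms sending each element of $K$ to a conjugate of itself, and nowhere establishes that this subgroup is generated by inversions and transvections of the elements of $N\setminus K$ together with single-letter partial conjugations. That generation statement is a partially symmetric analogue of McCool's theorem on basis-conjugating automorphisms (the case $K=N$); it is true, but it requires peak reduction or an equivalent argument and is not an elementary observation. As written, your argument only shows that the image of $P$ contains the subgroup generated by those particular elements. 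The paper's proof is structured precisely to sidestep this: it takes an \emph{arbitrary} $\phi\in P\Sigma(L_v, L_v\cap V_0)$, records conjugators $g_w$ with $\phi(w)=g_w w g_w^{-1}$ for $w\in L_v\cap V_0$, and defines a lift $\hat\phi$ by the explicit formula $\hat\phi(u)=g_w u g_w^{-1}$ for every $u$ in the component of $\G-\{v\}$ meeting $L_v$ in $w$; it then checks directly that $\hat\phi$ is an automorphism (by exhibiting an inverse up to inner) and that $P(\hat\phi)$ is $\phi$ in the $v$-factor and trivial elsewhere. To repair your proof, either cite a McCool-type generation theorem for $P\Sigma(n,k)$ or replace the generator-by-generator lifting with a lift of arbitrary elements as above.
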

 
 \begin{proof}  It follows from \cite{CCV}, Propostion 3.2, that an element of $\out^0(\AG)$ must preserve every element of $V_0$ up to conjugacy, thus the image of $P$ lies in  $\prod P\Sigma(L_v, L_v\cap V_0)$.
 
 Let $\phi$ be an element of $P\Sigma(L_v, L_v\cap V_0)$.  To prove surjectivity, we will show that $\phi$ lifts to an element $\hat \phi$ in $\out^0(\AG)$ whose image under $P$ is $\phi$ in the $v$-factor and the identity in every other factor.  
 
 Fix a representative automorphism for $\phi$ (which by abuse of notation we also denote $\phi$).  For each $w \in L_v$, $\phi$ acts on $w$ as conjugation by some $g_w \in F(L_v)$.  Define $\hat \phi \in \out^0(\AG)$ as follows.  Set $\hat\phi (v)=v$ and for $w \in L_v$, set $\hat\phi (w)= \phi(w) = g_w w g_w^{-1}$.  For $u \notin L_v$, let $w \in L_v$ be the (unique) vertex of $L_v$ at minimum distance from $u$.   (Equivalently, $w$ is the unique element of $L_v$ such that $w$ and $u$ lie in the same connected component of $\G-\{v\}$.)  Define $\hat\phi (u)=g_w u g_w^{-1}$.  
 
 It is not difficult to see that $\hat\phi$ is an automorphism.  For suppose $\rho$ is the inverse of $\phi$ in $\aut(L_v, L_v \cap V_0)$ and say $\rho(w)=h_wwh_w^{-1}$, $h_w \in F(L_v)$,  for $w \in L_v$.  Then 
 \[
 \rho\circ\phi(w) = \rho(g_w wg_w^{-1})=\rho(g_w)h_w w h_w^{-1}\rho(g_w^{-1})=w,
 \]
hence we must have $\rho(g_w)h_w= w^s$  for some $s$.  It follows that for any $u$ in the same connected component of  $\G-\{v\}$, $\hat\rho\circ \hat\phi(u)= w^suw^{-s}$.  This composite is clearly an automorphism.
 
Finally, consider $P(\hat\phi)$.  By definition, it agrees with $\phi$ on the $v$-factor.  For any other node $u \in V_0$,  there is a component $C$ of $\G -\{v\}$  such that $L_u$ lies entirely in $C \cup \{v\}$.  If $w$ is the unique element of $C \cap L_v$, then $\hat\phi$ acts on every element of $L_u$ as conjugation by $g_w$.  (Here we use the fact that $g_w$ lies in $F(L_v)$ hence commutes with $v$.)  Hence its image in $P\Sigma(L_u, L_u\cap V_0)$ is the identity.
 \end{proof}

\begin{corollary} If $\,\G$ is a tree, then $\out^0(\AG)$ is virtually torsion free and every finite index subgroup has a finite Eilenberg-MacLane complex.  In particular, $\out^o(\AG)$ is finitely-presented, and has finitely-generated homology in all dimensions.  
\end{corollary}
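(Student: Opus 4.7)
The plan is to assemble a short exact sequence for $\out^0(\AG)$, show that both the kernel and the quotient are virtually of type $F$, and then combine the two. By the theorem just proved together with Proposition~3.11 of \cite{CCV}, we have
\[
1 \longrightarrow K_P \longrightarrow \out^0(\AG) \stackrel{P}{\longrightarrow} \prod_{v \in V_0} P\Sigma(L_v, L_v\cap V_0) \longrightarrow 1,
\]
in which $K_P$ is free abelian of finite rank.

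Each factor of the quotient is a subgroup of some $\out(F_m)$, and hence is virtually torsion-free. For finiteness of a classifying space, I would appeal to the construction underlying Theorem~\ref{PSigma}: it will provide a finite-dimensional contractible complex $X_{N,K}$, obtained as a deformation retract of the minimal subcomplex $K_W$ of the spine of outer space, on which $P\Sigma(N,K)$ acts properly and cocompactly. Cocompactness will be inherited from the corresponding property of the $K_W$-action established in \cite{CuVo}. It follows that every torsion-free finite-index subgroup of $P\Sigma(N,K)$ admits a finite Eilenberg-MacLane complex, namely the quotient of $X_{N,K}$ by the subgroup. The product over $v \in V_0$ inherits these properties.

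To finish, I would choose a torsion-free finite-index subgroup $Q$ of $\prod_{v \in V_0} P\Sigma(L_v, L_v\cap V_0)$ with $BQ$ a finite CW complex, and set $H := P^{-1}(Q)$. Then $H$ has finite index in $\out^0(\AG)$ and fits into an extension of $Q$ by the finitely generated free abelian group $K_P$. Consequently $H$ is torsion-free, and admits as a classifying space the total space of a fiber bundle with base $BQ$ and fiber the torus $BK_P$; this total space is again a finite CW complex. Finite presentation and finite generation of homology in every dimension for the full group $\out^0(\AG)$ then follow by standard transfer arguments from $H$ to the ambient group. The only point requiring genuine care is the cocompactness of the $P\Sigma$-action on $X_{N,K}$; this is the main obstacle, and it is precisely why the argument depends on the explicit geometric construction carried out in the next section.
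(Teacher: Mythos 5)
Your argument is essentially the paper's proof: the paper likewise uses the extension $1 \to K_P \to \out^0(\AG) \to \prod_{v} P\Sigma(L_v, L_v\cap V_0) \to 1$, cites the finiteness properties of $P\Sigma(n,k)$ coming from \cite{CuVo} (the minimal subcomplex $K_W$ already has finite quotient, so the cocompactness you flag as the delicate point is immediate and does not require the retract built for Theorem~\ref{PSigma}), and concludes by closure of the class of groups with finite Eilenberg--MacLane complexes under extensions. Your write-up just spells out the fiber-bundle and transfer steps that the paper leaves implicit.
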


\begin{proof} Recall from \cite{CCV} that the kernel $K_P$ of $P$ is a finitely generated abelian group.  The corollary follows from the same statements for $P\Sigma(n,k)$ (proved in \cite{CuVo})  and $K_P$, since the class of groups with finite Eilenberg-MacLane complexes is closed under extensions.   
\end{proof}

%%%%%%%%%%%%%%%%%%%%%%%%%%%%%%%%%%%%

\section{Partially symmetric automorphisms of free groups}
Let $F_n$ be a free group with a fixed set of generators $x_1,\ldots,x_n$, and let $P\Sigma(n,k)$ be the subgroup of the outer automorphism group of  $F_n$ consisting of outer automorphisms which send the first $k$ generators to conjugates of themselves.  For $k=n$ this is known as the pure symmetric automorphism group, and Collins \cite{Col} showed that it has virtual cohomological dimension equal to $n-2$. The full outer automorphism group $\out(F_n)$ has virtual cohomological dimension $2n-3$.  We show that the virtual cohomological dimension of $P\Sigma(n,k)$ varies linearly with $s$ between these two values.  

\begin{theorem}\label{PSigma} For $k\geq 1$, $P\Sigma (n,k)$ has VCD equal to $2n-k-2$.
\end{theorem}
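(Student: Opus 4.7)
The plan is to prove the matching inequalities $\vcd(P\Sigma(n,k))\geq 2n-k-2$ and $\vcd(P\Sigma(n,k))\leq 2n-k-2$ separately. For the lower bound I would exhibit an explicit free abelian subgroup of rank $2n-k-2$. Natural candidates in $P\Sigma(n,k)$ are the $k-1$ partial conjugations $\gamma_i\colon x_i\mapsto x_1 x_i x_1^{-1}$ (fixing every other generator), for $2\leq i\leq k$, together with the $2(n-k)$ transvections $\rho_i\colon x_i\mapsto x_i x_1$ and $\lambda_i\colon x_i\mapsto x_1 x_i$, for $k+1\leq i\leq n$. All of these preserve each of $x_1,\ldots,x_k$ up to conjugacy and manifestly commute in $\aut(F_n)$, spanning a free abelian subgroup of rank $(k-1)+2(n-k)=2n-k-1$. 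A direct computation shows that $\gamma_2\cdots\gamma_k\prod_{i>k}\lambda_i\rho_i^{-1}$ equals the inner automorphism $\iota_{x_1}$; a short argument identifying which products among the $\gamma_i,\lambda_i,\rho_i$ can possibly be inner shows this is the only relation in $\out(F_n)$, so the image in $P\Sigma(n,k)$ has rank $2n-k-2$.

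For the upper bound I would construct a contractible complex of dimension $2n-k-2$ on which $P\Sigma(n,k)$ acts with finite stabilizers. The natural starting point is the minimal subcomplex $\KW$ of the spine of outer space associated to $W=\{x_1,\ldots,x_k\}$, which is contractible by \cite{CuVo}. Since every element of $P\Sigma(n,k)$ preserves each $x_i$ up to conjugacy for $i\leq k$, the group preserves $\KW$, and point stabilizers are finite as they are inherited from the proper action of $\out(F_n)$ on the spine. Because $P\Sigma(n,k)$ is virtually torsion-free as a subgroup of $\out(F_n)$, a proper contractible $P\Sigma(n,k)$-CW complex of dimension $d$ would give $\vcd(P\Sigma(n,k))\leq d$.

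The dimension of $\KW$ is generally larger than $2n-k-2$, so the heart of the argument is to construct a $P\Sigma(n,k)$-equivariant deformation retraction of $\KW$ onto a subcomplex $\KS$ of dimension exactly $2n-k-2$. Combinatorially, cells of $\KW$ correspond to marked graphs in which each $x_i$, $i\leq k$, is immersed as an embedded loop of minimum length. I would target the subcomplex in which each of these $k$ loops is realized as a single edge (a ``rose petal'') attached to a core subgraph of rank $n-k$; a dimension count then gives $\dim\KS=2n-k-2$, and the retraction itself should come from a canonical loop-shrinking procedure that iteratively collapses forest edges inside each embedded $x_i$-loop. The main obstacle will be organizing this shrinking so that it descends to a continuous $P\Sigma(n,k)$-equivariant deformation retraction of the whole CW complex rather than merely a map on the poset of marked graphs: one must specify the collapse compatibly on open simplices, check that it respects face identifications, and verify that its image is exactly $\KS$. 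Once this is accomplished, contractibility of $\KS$ follows from that of $\KW$ and the dimension count yields $\vcd(P\Sigma(n,k))\leq\dim\KS=2n-k-2$, matching the lower bound.
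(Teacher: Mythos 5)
Your lower bound is exactly the paper's: the same $\gamma_i,\lambda_i,\rho_i$ generate a rank-$(2n-k-1)$ free abelian subgroup of $\aut(F_n)$ containing conjugation by $x_1$, and since any abelian subgroup of ${\rm Inn}(F_n)\cong F_n$ is cyclic, the image in $\out(F_n)$ has rank exactly $2n-k-2$. Your upper-bound architecture --- act on the minimal subcomplex $K_W$ for $W=\{x_1,\dots,x_k\}$ and equivariantly deformation retract it onto a $(2n-k-2)$-dimensional subcomplex --- is also the paper's. But the step you defer as ``the main obstacle'' is the entire content of the proof, and the mechanism you sketch for it does not work. There is no canonical ``loop-shrinking'' that collapses each embedded $x_i$-cycle to a single edge: collapsing all but one edge of a cycle requires choosing which edge survives, and no such choice can be made equivariantly or compatibly with the forest-collapse poset relation, so you do not even get a poset map, let alone a deformation retraction. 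Your target subcomplex (each $x_i$ realized as a one-edge petal) is also smaller than necessary; the subcomplex to which the dimension count actually applies is $D_{n,k}$, spanned by marked graphs in which the $k$ cycles $C_1,\dots,C_k$ are pairwise \emph{disjoint} (each may still have many edges, but each carries at least one vertex of its own, whence at least $k$ vertices at the bottom of the poset and $2n-2$ at the top, giving dimension $2n-k-2$).

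The paper reaches $D_{n,k}$ in two stages, and only the first is a forest collapse. First, the union $\Phi=\bigcup(C_i\cap C_j)$ of pairwise intersections of the cycles is shown to be a forest whose collapse defines a poset map (Lemma~\ref{forest}, which rests on the fact that every vertex of $K_{n,k}$ lies in the star of a minimal rose in which each $x_i$ is a single petal); this retracts $K_{n,k}$ onto the subcomplex $P_{n,k}$ where cycles meet at most in points. Second --- and this is the genuinely nontrivial part --- $P_{n,k}$ is retracted onto $D_{n,k}$ not by collapsing anything but by attaching the vertices of $P_{n,k}\setminus D_{n,k}$ in order of decreasing height and proving that each such vertex is attached along a contractible upper link; this reduces to showing that the complex of ``legal'' ideal edges at a node lying on at least two of the cycles is contractible, which the paper establishes by a Morse-theoretic induction on the number of half-edges not lying on any cycle. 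Nothing in your proposal substitutes for this second step, so as written the upper bound is not established.
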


A lower bound for this VCD is given by the rank of the free abelian subgroup of $P\Sigma (n,k)$ generated by automorphisms $\gamma_i$ (for $1<i\leq k$) and $\lambda_i, \rho_i$ (for $k<i\leq n$), where
$$
{\gamma_i\colon\begin{cases}  x_i\mapsto x_1^{-1}x_i x_1&\cr
            x_j\mapsto x_j & j\neq i
            \end{cases}}
$$
and
$$
{\lambda_i\colon\begin{cases}  x_i\mapsto x_1x_i&\cr
            x_j\mapsto x_j & j\neq i
            \end{cases}}
\qquad\qquad{\rho_i\colon\begin{cases}  x_i\mapsto x_ix_1\cr
            x_j\mapsto x_j & j\neq i
            \end{cases}}
$$
            
These automorphisms generate a free abelian subgroup of $\aut(F_n)$ of rank $2n-k-1$.   This subgroup contains conjugation by $x_1$, so projects to a free abelian subgroup of $P\Sigma(n,k)$ of rank $2n-k-2$.

To determine the upper bound,  we will find a contractible $(2n-k-2)$-dimensional simplicial complex on which $P\Sigma(n,k)$ acts with finite stabilizers.  We  first recall some background about the spine of Outer space and its minimal subcomplexes.

\subsection{Background on the spine of outer space and minimal subcomplexes}

Recall that the {\it spine of Outer space} is a simplicial complex $K$ of dimension $2n-3$ on which $\out(F_n)$ acts with finite stabilizers and compact quotient \cite{CuVo}.  Vertices of $K$ are   {\it marked graphs} $(g,\G)$, where $\G$ is a graph with all nodes of valence at least 3 and and $g$ is a homotopy equivalence from a fixed rose $R_n$ to $\G$.    Marked graphs $(g,\G)$ and $(g',\G')$ are equivalent if there is a graph isomorphism $h\colon \G\to\G'$ with $h\circ g$ homotopic to $g'$.  Note that we are allowing separating edges in the graphs $\G$, i.e. we are considering the full Outer space, as opposed to reduced Outer space. 

Vertices $(g,\G)$ and $(g',\G')$ span an edge if $(g',\G')$ can be obtained from $(g,\G)$ by collapsing each tree in a forest $F\subset \G$ to a point, and $K$ is the associated flag complex.  In other language, forest collapse is a poset relation on the vertices of $K$, and $K$ is the geometric realization of this poset.  Height in the poset is given by the number of nodes of $\G$.
The minimal elements of the poset are  marked roses,  and $K$ is the union of the stars of these roses. 
The maximal elements are marked graphs with all nodes of valence 3.  Such a graph has 2n-2 nodes, hence the dimension of $K$ is $2n-3$.

(Ideal edges). The upper link of a vertex $(g,\G)$ in the poset consists of marked graphs $(g',\G')$ which collapse to $(g,\G)$.  Such marked graphs are said to be obtained by {\it blowing up} nodes of $\G'$ into trees.  We recall from \cite{CuVo} that a convenient way of describing blow-ups at a node $v$ is in terms of partitions of the set $H(v)$ of half-edges terminating at $v$.  An edge of the blow-up tree partitions the tree, and therefore $H(v)$, into two sets, each with at least two elements.  For this reason, such a partition is called an {\it ideal edge} at $v$.  A set of ideal edges is {\it compatible} if it corresponds to a tree.  Since blowing up can be done independently at different nodes of $\G$, the upper link of $(g,\G)$ is the join of subcomplexes $B'(v)$.  Each $B'(v)$ is the barycentric subdivision of the complex $B(v)$ whose vertices are ideal edges at $v$   and whose $i$-simplices are sets of $i+1$ compatible ideal edges.

(Norm and minimal subcomplexes).  Given a set $W$ of conjugacy classes in $F_n$, we can define a norm on roses as follows.  Represent each $w\in W$ by a cyclically reduced edge-path in $R_n$, and define $|w|$ to be the length  of the (cyclically reduced) edge-path $g(w)$ in $\Gamma$.  Then  
$$\|\rho\|=\sum_{w\in W} |g(w)|.$$
Stars of roses of minimal norm with respect to $W$ form a subcomplex of $K$ which we denote $\KW$.  The main theorem of \cite{CuVo} is:
\begin{CVtheorem} [Culler-Vogtmann]  Let $W$ be a set of cyclic words in $F_n$.  Then the associated subcomplex $\KW$ of $K$  is contractible, the action is proper and the quotient $\KW/Stab(W)$ is finite.  
\end{CVtheorem}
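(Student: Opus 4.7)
The plan is to handle the three claims—properness of the action, finiteness of the quotient, and contractibility of $\KW$—separately, with contractibility being by far the main work. Properness is immediate: the stabilizer of any vertex $(g,\Gamma)$ in $\out(F_n)$ consists of outer classes realized by graph automorphisms of the finite graph $\Gamma$, so it is finite; intersecting with $Stab(W)$ preserves this. For finiteness of the quotient, observe that $\KW$ is the union of stars of \emph{minimal} roses and each such star is a finite subcomplex of $K$; it therefore suffices to check that $Stab(W)$ acts with finitely many orbits on the set of minimal roses. This reduces to a bounded-geometry statement: there are only finitely many combinatorial types of marked roses of any bounded norm, and up to the $Stab(W)$-action each combinatorial type is represented by only finitely many markings.

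For contractibility, I would build a deformation retraction of the full spine $K$ onto $\KW$, using the norm as a Morse function. Extend $\|\cdot\|$ from roses to all vertices via $\|(g,\Gamma)\|:=\sum_{w\in W}|g(w)|_\Gamma$, where $|g(w)|_\Gamma$ is the cyclically reduced edge-length in $\Gamma$. Because collapsing a forest $F\subset\Gamma$ can only weakly decrease this quantity, the norm is monotone along forest collapse; letting $c_0$ be its minimum value, $\KW$ is exactly the subcomplex of $K$ consisting of vertices that collapse to some rose of norm $c_0$. The scheme is a descending induction on the norm: for each vertex $(g,\Gamma)$ with $\|(g,\Gamma)\|>c_0$, produce a ``downward'' move in $K$—a forest collapse, possibly preceded by an ideal-edge blow-up—that strictly decreases the norm, and show that the subcomplex of such norm-decreasing moves inside the link of $(g,\Gamma)$ is contractible. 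Patching these local retractions together yields a deformation of $K$ onto $\KW$, and contractibility of $\KW$ then follows from that of $K$.

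The main obstacle is the local contractibility statement at each non-minimal vertex, which is the combinatorial heart of the argument. At a node $v$ of $\Gamma$, ideal edges at $v$ correspond to two-piece partitions of the set $H(v)$ of half-edges there, and one must identify which such partitions correspond to norm-decreasing blow-ups. The key tool is the \emph{Whitehead graph} of $W$ at $v$: its vertex set is $H(v)$ and its edges record the turns taken by the cyclic words $w\in W$ through $v$. A Whitehead-type lemma identifies norm-decreasing ideal edges with certain ``efficient'' cuts of this Whitehead graph, and the remaining task is to argue that these efficient cuts span a contractible subcomplex of $B(v)$, which then joins contractibly across nodes of $\Gamma$ to give the downward link structure needed for the global retraction. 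Establishing that efficient-cut subcomplex is contractible is where the real work lies; modulo it, the rest of the proof is assembly.
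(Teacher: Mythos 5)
This statement is not proved in the paper at all: it is quoted, with attribution, from Culler--Vogtmann \cite{CuVo}, so there is no in-paper argument to measure your proposal against. Judged on its own terms, your treatment of properness and of finiteness of the quotient is fine in outline (vertex stabilizers embed in automorphism groups of finite graphs; there are finitely many graphs with at most $2n-2$ nodes, and a bounded-norm condition leaves only finitely many markings per graph up to the action of $Stab(W)$), though the last finiteness assertion is exactly the kind of claim that needs an argument rather than an ``observe.''

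The contractibility part, however, has a genuine gap, and you have located it yourself: the entire content of the Culler--Vogtmann theorem is concentrated in the step you defer, namely that the subcomplex of norm-decreasing (``efficient'') ideal edges in the blow-up complex at a vertex is contractible, together with the peak-reduction machinery needed to show that this subcomplex is even \emph{non-empty} at every non-minimal vertex. Saying ``modulo it, the rest of the proof is assembly'' is accurate but means no proof has been given; the Whitehead-graph analysis and the combinatorial contractibility of the star-graph/ideal-edge poset occupy most of \cite{CuVo}. Two further cautions about your global scheme. First, your extension of the norm to all vertices by $\|(g,\Gamma)\|=\sum|g(w)|_\Gamma$ does not interact as cleanly with the poset as you suggest: blow-ups can increase lengths and collapses can decrease them, so a vertex of $K$ may lie in the star of several roses of different norms, and ``the subcomplex of vertices that collapse to some rose of norm $c_0$'' is the union of stars of minimal roses only after one checks that minimality is detected at the rose level. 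Second, Culler and Vogtmann do not retract all of $K$ onto $\KW$ by a single Morse function; they build $\KW$ (and then $K$) as an increasing union of stars of roses ordered by norm and prove each successive intersection is contractible. Your descending-link formulation may well be workable --- similar Morse-theoretic rewrites exist in the literature --- but it is a reorganization whose local hypotheses (non-empty, contractible descending links at every non-minimal vertex) are precisely the unproved combinatorial core.
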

 Recall that $\out(F_n)$ acts on $K$ as follows.  Realize $\phi\in \out(F_n)$ by a homotopy equivalence $f\colon R_n\to R_n$; then $(g,\Gamma)\cdot\phi = (g\circ f,\Gamma)$. In particular, the stabilizer of $W$ in $\out(F_n)$ preserves the norm, so that  $\KW$ is invariant under the action of this stabilizer. 

\subsection{A complex for $P\Sigma(n,k)$}

We now  fix $W$ to be the conjugacy classes of $\{x_1,\ldots,x_k\}$.  Let $\KS=K_W$ denote the minimal subcomplex for $W$ and $\out(n,k)$ the stabilizer of $W$ in $\out(F_n)$.  Since $P\Sigma (n,k)$ has finite index in $\out(n,k)$  it has the same VCD, and we will use the action of $\out(n,k)$ on $\KS$ to compute this. 

We first record some simple observations about the vertices of $\KS$. For a vertex $(g,\G)$ in $\KS$,  define a forest $F\subset \G$ to be {\it admissible} if the marked graph $(g\circ c_F, \G_F)$ obtained by collapsing each tree in $F$ to a point is also in $\KS$.

\begin{lemma}\label{forest} Let $(g,\Gamma)$ be a vertex of   $\KS$, and let $C_i$ be the reduced edge path representing $g(x_i)$, for $1\leq i\leq k$.  Then
\begin{enumerate}
\item $C_i$ is  a simple cycle in $\Gamma$, for each $i=1,\ldots,s$.
\item $C_i\cap C_j$ is either empty,  a point or  a connected arc.
\item  $\Phi=\cup(C_i\cap C_j)$ is a forest in $\Gamma$.  
\item  If $F$ is an admissible forest in $\Gamma$, then $F\cup \Phi$ is an admissible forest.  
\end{enumerate}
\end{lemma}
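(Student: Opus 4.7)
The plan is to exploit the very rigid structure of vertices of $\KS$: each such vertex is a blow-up of a minimal-norm rose, and the minimal roses themselves have a completely explicit description.

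Since each $x_i$ is a basis element of $F_n$, the cyclic length $|g(x_i)|$ in any rose $(g, R_n')$ is at least one, with equality attained at the identity marking on $R_n$. Hence the minimum of the norm $\|\rho\| = \sum_{i=1}^{k} |g(x_i)|$ equals $k$, and at a minimal rose $(g_0, R_n')$ each $g_0(x_i)$ has cyclic length exactly one, so $g_0(x_i) = y_{j_i}^{\epsilon_i}$ for some petal $y_{j_i}$ of $R_n'$; moreover the $y_{j_i}$ are distinct, since the $x_i$ lie in distinct conjugacy classes. Any vertex $(g, \Gamma) \in \KS$ lies in the star of such a minimal rose, so $\Gamma$ is obtained by blowing up the unique vertex of $R_n'$ into a tree $T \subseteq \Gamma$ that collapses to the rose vertex. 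After blow-up, the petal $y_{j_i}$ becomes an edge of $\Gamma$ with endpoints $a_i, b_i \in T$ (possibly equal), and the reduced cyclic edge path $C_i$ traverses $y_{j_i}$ once and then follows the unique reduced path $p_i \subseteq T$ from $b_i$ to $a_i$.

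Parts (1), (2), and (3) then follow directly from this description. Part (1): $C_i$ uses $y_{j_i}$ exactly once and follows a reduced path in a tree, so it is a simple cycle. Part (2): for $i \neq j$ we have $y_{j_i} \neq y_{j_j}$, hence $C_i \cap C_j = p_i \cap p_j$, which is the intersection of two reduced paths in a tree and is therefore empty, a single vertex, or a sub-arc. Part (3): $\Phi \subseteq T$ is a subgraph of a tree, hence a forest.

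For part (4), the case $F = \emptyset$ is easy: since $\Phi \subseteq T$, collapsing $\Phi$ yields a graph that still collapses to $R_n'$ via $T/\Phi$, so $\Gamma/\Phi$ lies in the star of $(g_0, R_n')$ and is in $\KS$. For general admissible $F$, the plan is to first collapse $F$ to obtain $\Gamma_F \in \KS$, apply parts (1)--(3) inside $\Gamma_F$ to produce the corresponding intersection forest $\Phi' \subseteq \Gamma_F$, and argue that $\Gamma_{F \cup \Phi}$ is naturally isomorphic to $(\Gamma_F)/\Phi'$, which lies in $\KS$ by the $F = \emptyset$ case applied to $\Gamma_F$. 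The main obstacle is to verify this identification: one must check that $F \cup \Phi$ is itself a forest (i.e.\ that no new cycle is created by combining the two), handle edges of $\Phi$ that may already lie in $F$, and control the reductions that can occur in the image cycles $\bar C_i \subseteq \Gamma_F$, in particular the case where $F$ collapses a petal edge $y_{j_i}$, which forces the corresponding path $p_i$ to be nontrivial and re-routes $C_i$ through the remains of $T$ in $\Gamma_F$.
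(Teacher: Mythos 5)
Your treatment of parts (1)--(3) is essentially the paper's argument: every vertex of $\KS$ lies in the star of a minimal rose, minimality of the norm forces each $x_i$ to be represented by a single petal there, and after blowing up the rose vertex into a tree $T$ each $C_i$ is a petal edge together with a reduced arc in $T$, so all pairwise intersections are intersections of arcs in $T$. (One small caveat: distinctness of the petals $y_{j_i}$ does not follow merely from the $x_i$ being non-conjugate, since one must also exclude $g_0(x_i)\sim y$ and $g_0(x_j)\sim y^{-1}$ for the same petal $y$; this is easily ruled out by looking at the abelianization, where two columns of an invertible integer matrix cannot be $\pm$ the same standard basis vector. The paper glosses over this too.)

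Part (4) is where your proposal has a genuine gap. You propose to collapse $F$ first, recompute the intersection forest $\Phi'$ in $\Gamma_F$, and identify $\Gamma_{F\cup\Phi}$ with $(\Gamma_F)/\Phi'$ --- but you then explicitly list as unresolved exactly the points that constitute the proof: that $F\cup\Phi$ is a forest at all, that the image of $\Phi$ in $\Gamma_F$ matches $\Phi'$, and the bookkeeping when $F$ meets the petal edges. As written, nothing in part (4) is actually established beyond the case $F=\emptyset$. The paper's resolution is a one-line strengthening of your own setup for (1)--(3): the containment $\Phi\subseteq T$ holds for the blow-up tree $T$ of \emph{every} minimal rose having $(g,\Gamma)$ in its star, i.e.\ $\Phi$ lies in every maximal admissible forest. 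Given an admissible $F$, collapse it and then collapse further to a minimal rose; pulling back exhibits a maximal admissible tree $T$ with $F\subseteq T$. Then $F\cup\Phi\subseteq T$, so $F\cup\Phi$ is automatically a forest, and $\Gamma_{F\cup\Phi}$ collapses onto the minimal rose $\Gamma/T$, hence lies in its star and therefore in $\KS$. This sidesteps every one of the verifications you flagged; I recommend replacing your part (4) strategy with this observation, which you have already done all the work to justify.
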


\begin{proof}  Let $\rho$ be any marked rose in $\KS$ with $(g,\G)$ in its star. These statements all follow from the observation that, in $\rho$, the image of $x_i$ is a single petal.  Since $(g,\G)$ is obtained from $\rho$ by blowing up the node of $\rho$ into a tree $T$, the intersection of any two $C_i's$ is contained in $T$, so the union of all such intersections is a forest in $T$ (see Figure 2).
Since this is true for any  choice of $\rho$,  this union is in all maximal admissible forests. 
\end{proof}

\labellist
\small\hair 2pt
\pinlabel{$C_1$} at 60 40
\pinlabel{$C_2$} at 30 110
\pinlabel{$C_3$} at 100 140
\pinlabel{$C_4$} at 130 110
\pinlabel{$C_1$} at 300 35
\pinlabel{$C_2$} at 360 130
\pinlabel{$C_3$} at 420 100
\pinlabel{$C_4$} at 430 42
\endlabellist
\begin{center}
\includegraphics[width=4in]{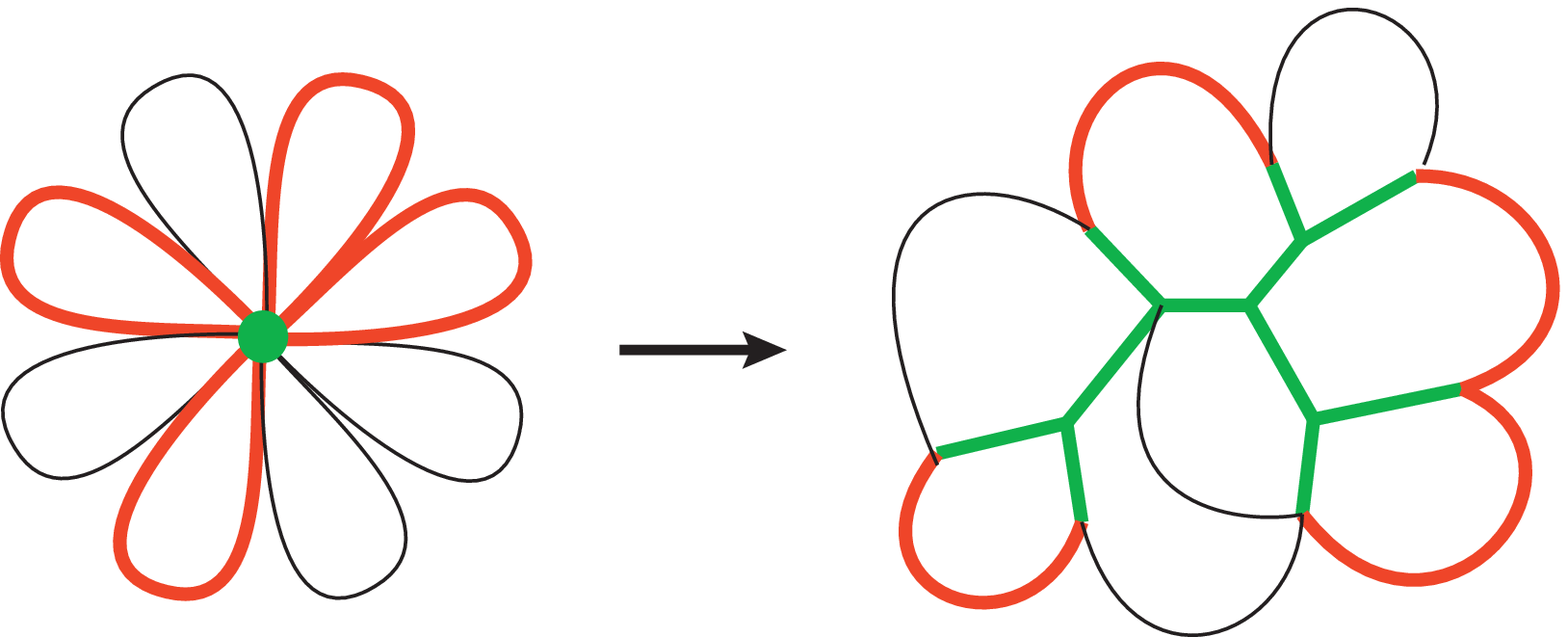}

{Figure 2.  The cycles $C_i$ in a blowup  of $\rho$ } 
\end{center}
\medskip

The dimension of  $\KS$ is equal to $2n-3$ for any $k$.  This is the dimension we need for $k=1$, but is too big for $k>1$.   Instead we consider the following subcomplexes.

\begin{definition} Let $\KO$ be the subcomplex $\KS$ spanned by vertices $(g,\G)$ in which any two of the cycles $C_1,\ldots,C_k$  intersect at most in a point. 
\end{definition}

\begin{definition} Let $\LO$ be the subcomplex $\KS$ spanned by vertices $(g,\G)$ in which all of the cycles $C_1,\ldots,C_k$ are disjoint.  
\end{definition}

Any marked graph in $\LO$ has at least $k$ vertices (one on each $C_i$), and at most $2n-2$, so $dim(\LO) = 2n-k-2$ (for $k=n$, $dim(\KO)$ is also equal to $n-2$).  This dimension is equal to our lower bound on the VCD of $\out(n,k)$, so it remains only to prove the following two propositions.

\labellist
\small\hair 2pt
\pinlabel{$C_1$} at 30 55
\pinlabel{$C_2$} at 90 150
\pinlabel{$C_3$} at 150 120
\pinlabel{$C_4$} at 160 62
\pinlabel{$C_1$} at 370 35
\pinlabel{$C_2$} at 430 145
\pinlabel{$C_3$} at 490 130
\pinlabel{$C_4$} at 500 80
\endlabellist
\begin{center}
\includegraphics[width=4in]{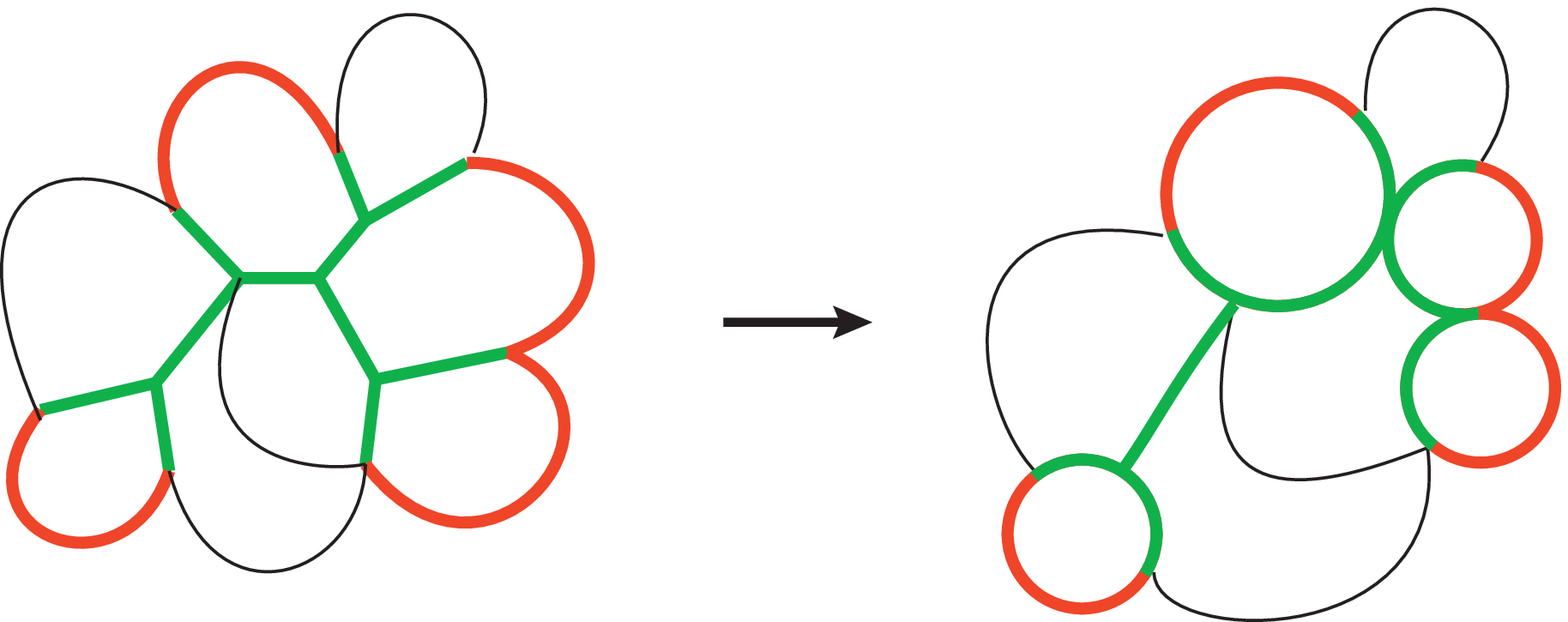}
\end{center}
\centerline{Figure 3. Deforming $\KS$ to $\KO$ } 
\medskip

\begin{proposition}\label{points} There is an $\out(n,k)$-equivariant deformation retraction of $\KS$ onto  $\KO$. 
\end{proposition}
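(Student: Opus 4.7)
The plan is to construct an equivariant retraction $r\colon \KS \to \KO$ that ``pinches'' each shared arc between distinct cycles $C_i, C_j$ to a single point, then show it is a deformation retraction.

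For a vertex $v=(g,\G) \in \KS$, let $\Phi_v = \bigcup_{i \neq j}(C_i \cap C_j)$. By Lemma~\ref{forest}, $\Phi_v$ is a forest, and taking $F=\emptyset$ in part (4) shows $\Phi_v$ is admissible, so collapsing its components yields a vertex $r(v) := v/\Phi_v$ of $\KS$. After this collapse, each shared arc becomes a single point, so every pair of cycles meets in at most a point, whence $r(v) \in \KO$. If $v \in \KO$ already, then $\Phi_v$ is empty and $r(v)=v$, so $r$ restricts to the identity on $\KO$. Equivariance under $\out(n,k)$ is immediate: any element of $\out(n,k)$ permutes the conjugacy classes $\{[x_1],\ldots,[x_k]\}$, hence permutes the cycles $C_i$, hence preserves $\Phi_v$.

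Next I verify that $r$ is order-preserving with respect to the forest-collapse poset. Suppose $v \leq w$, so $v = w/F$ for some admissible forest $F \subset w$. Since $C_i \cap C_j$ in $v$ is the image under $c_F$ of $C_i \cap C_j$ in $w$, one has $\Phi_v = c_F(\Phi_w)$. By Lemma~\ref{forest}(4), the combined forest $F \cup \Phi_w$ is admissible in $w$, and the marked graph $w/(F \cup \Phi_w)$ equals both $(w/F)/\Phi_v = r(v)$ and $r(w)/c_{\Phi_w}(F)$, giving $r(v) \leq r(w)$. In fact $r(v)$ is the maximum element of $\{u \in \KO : u \leq v\}$, since any such $u = v/F'$ must satisfy $F' \supseteq \Phi_v$ (otherwise some pair $C_i, C_j$ would still share an arc in $u$), so $u \leq r(v)$.

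Thus $r$ is an idempotent, order-decreasing poset retraction onto $\KO$ that commutes with the $\out(n,k)$-action. The desired equivariant strong deformation retraction now follows from standard poset techniques: the straight-line homotopy slides each vertex $v$ along the edge $\{r(v),v\} \subset \KS$, and these slides glue over simplices because of the order-compatibility established above; equivalently, Quillen's Theorem~A applied to the inclusion $\KO \hookrightarrow \KS$ (with the contractible fiber $\KO_{\leq v}$ having maximum $r(v)$) yields a homotopy equivalence, and the closure-operator structure of $r$ upgrades this to a strong deformation retraction.

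The main obstacle is the poset-map property $r(v)\leq r(w)$, which relies crucially on Lemma~\ref{forest}(4); once this is in place, the rest is routine order-complex machinery.
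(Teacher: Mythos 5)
Your proposal is correct and follows essentially the same route as the paper: collapse the components of $\Phi=\bigcup(C_i\cap C_j)$, use Lemma~\ref{forest} to see that this is an admissible, order-compatible (poset) retraction onto $\KO$, and conclude that it is an equivariant deformation retraction. Your write-up simply spells out in more detail the poset-map verification and the closure-operator structure that the paper leaves implicit.
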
 

\begin{proof} Let $(g,\G)$ be any vertex in $\KS$.  We  perform the deformation retraction of $\KS$ to $\KO$ by collapsing each component of $\Phi=\cup(C_i\cap C_j)$ to a point (See Figure 3).  If $(g',\Gamma')$ in $\KS$ is obtained from $(g,\Gamma)$ by collapsing a forest $F$, then $F\cup \Phi$ is also a forest in $\Gamma$ by part (3) of Lemma~\ref{forest}; thus collapsing $\Phi$ is a poset map, so gives a deformation retraction onto its image.
\end{proof}

\labellist
\small\hair 2pt
\pinlabel{$C_1$} at 35 32
\pinlabel{$C_2$} at 100 140
\pinlabel{$C_3$} at 165 130
\pinlabel{$C_4$} at 170 75
\pinlabel{$C_1$} at 330 40
\pinlabel{$C_2$} at 395 155
\pinlabel{$C_3$} at 510 135
\pinlabel{$C_4$} at 480 42
\endlabellist
\begin{center}
\includegraphics[width=4in]{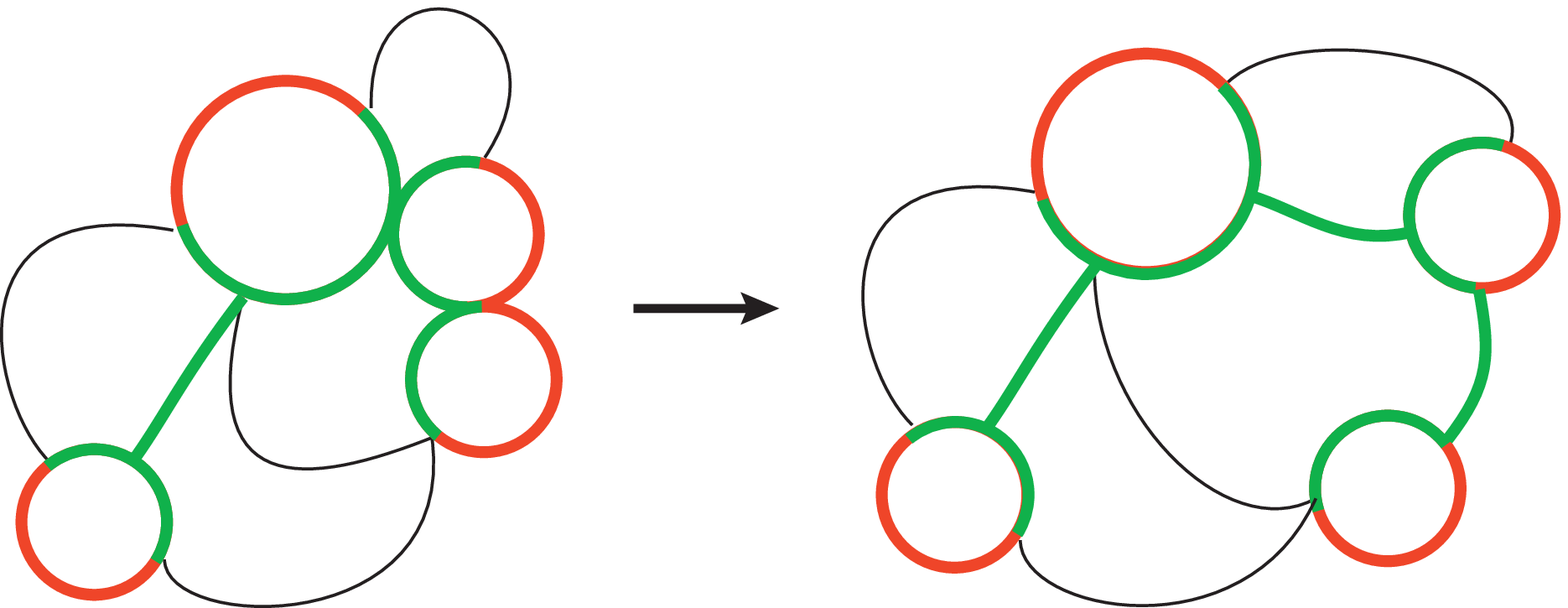}
\end{center}
\centerline{Figure 4. Deforming $\KO$ to $\LO$ }
\medskip

\begin{proposition}\label{disjoint} There is an $\out(n,k)$-equivariant deformation retraction of $\KO$ onto  $\LO$. 
\end{proposition}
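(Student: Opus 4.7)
The plan is to mirror the approach of Proposition~\ref{points}, but moving up in the poset by a canonical blow-up rather than down by a canonical collapse. For each vertex $(g,\G) \in \KO$, I define $\psi(g,\G)$ as follows: at each node $v$ of $\G$ lying on two or more of the cycles $C_1, \ldots, C_k$, introduce the ideal edges
\[
A_{i_j}(v) = \{h^+_{C_{i_j}}, h^-_{C_{i_j}}\}, \quad j = 1, \ldots, m,
\]
one for each cycle $C_{i_j}$ through $v$, where $h^\pm_{C_{i_j}}$ denote the two half-edges of $C_{i_j}$ at $v$. These ideal edges are pairwise disjoint, hence compatible. The resulting blow-up tree at $v$ places each pair $\{h^+_{C_{i_j}}, h^-_{C_{i_j}}\}$ at its own leaf node, so each cycle through $v$ occupies a distinct node of the blow-up. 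Carrying this out simultaneously at every shared vertex produces $\psi(g,\G) \in \LO$ which collapses back to $(g,\G)$ by contracting the newly introduced ideal edges; in particular $\psi(g,\G) \ge (g,\G)$ in the poset. Because the construction depends only on the labeled cycle set $\{C_1,\ldots,C_k\}$, and $\out(n,k)$ preserves $W$, the map $\psi$ is $\out(n,k)$-equivariant.

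The key step is to verify that $\psi$ is a poset map: if $(g,\G) \le (g_1,\G_1)$ in $\KO$ via forest collapse $F \subset \G_1$, then $\psi(g,\G) \le \psi(g_1,\G_1)$. I would prove this by exhibiting a subforest $F' \subseteq F$, viewed inside $\psi(g_1,\G_1)$, such that $\psi(g_1,\G_1)/F' = \psi(g,\G)$ as marked graphs. The natural choice is to let $F'$ consist of those edges of $F$ whose endpoints in $\psi(g_1,\G_1)$ are not cycle leaves; the remaining edges of $F$, namely those adjacent to some cycle leaf $v^{C_{i_j}}$, are precisely the edges that collapsing $F$ in $\G_1$ would use to merge cycle-bearing vertices into new shared nodes of $\G$, and these edges reappear in $\psi(g,\G)$ as the new separating ideal edges at those nodes. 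This is the blow-up analogue of Lemma~\ref{forest}(4) for the preceding proposition.

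Granting the poset-map property, the relation $\psi(g,\G) \ge (g,\G)$ means that each pair lies in a common simplex of $\KO$, so the straight-line homotopy within each simplex from $(g,\G)$ to $\psi(g,\G)$ assembles into an $\out(n,k)$-equivariant deformation retraction of $|\KO|$ onto $|\LO|$; the restriction of $\psi$ to $\LO$ is the identity by inspection, since vertices in $\LO$ have no shared nodes and so $\Psi(g,\G)$ is empty. The main obstacle is the combinatorial bookkeeping in the poset-map verification — in particular, one must unpack the marking data to confirm that $\psi(g_1,\G_1)/F' = \psi(g,\G)$ holds as marked graphs rather than merely as isomorphism types, which requires checking that the markings induced by the two blow-up constructions agree under the natural graph isomorphism identifying the cycle leaves on both sides.
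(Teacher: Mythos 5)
There is a genuine gap: your canonical blow-up $\psi$ is not a poset map, and no choice of canonical blow-up can be. Here is a concrete failure. Take $n=4$, $k=2$, and let $(g_1,\G_1)$ have two nodes $u_1,u_2$ joined by an edge $e$, with $C_1$ and a second loop $f_1$ attached at $u_1$, and $C_2$ and a second loop $f_2$ attached at $u_2$ (marking $x_1\mapsto C_1$, $x_2\mapsto C_2$, $x_3\mapsto f_1$, $x_4\mapsto ef_2\bar e$; the collapsed rose has minimal norm $2$, so this lies in $\LO\subset\KO$). Collapsing $F=\{e\}$ gives a rose $(g,\G)$ with one node $v$ carrying all four loops, which lies in $\KO$. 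Since the cycles are already disjoint in $\G_1$, $\psi(g_1,\G_1)=(g_1,\G_1)$. But $\psi(g,\G)$ blows $v$ up into a central node carrying $f_1$ and $f_2$ \emph{together} with two cycle leaves, whereas in $\G_1$ the half-edges of $f_1$ sit at the same node as $C_1$ and those of $f_2$ at the same node as $C_2$. These two marked graphs are incomparable (neither collapses onto the other --- they even have different numbers of nodes locally in a way that rules out either direction), so $\psi(g,\G)\not\leq\psi(g_1,\G_1)$ and order-preservation fails. The underlying obstruction is that a forest collapse which merges two cycle-bearing nodes forgets how the non-cycle half-edges were distributed between them, and any single canonical blow-up of the merged node must commit to one distribution; so the difficulty you flag as ``combinatorial bookkeeping'' is actually fatal to this strategy. (A smaller definitional wrinkle: at a valence-$4$ node shared by exactly two cycles your two ideal edges $A_{i_1}$, $A_{i_2}$ are complementary, hence equal, so the ``one leaf per cycle'' picture also needs adjustment.)

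The paper avoids this by not choosing a preferred blow-up at all. It builds $\KO$ from $\LO$ by attaching vertices in order of decreasing poset height, so each new vertex is glued along its entire upper link; that upper link is a join of complexes $L(v)$ of \emph{legal} ideal edges at the offending nodes, and the real content of the proof is a Morse-theoretic induction showing each such $L(v)$ is contractible. The contractibility of the whole space of cycle-separating blow-ups is exactly the correct substitute for the nonexistent canonical one, so if you want to salvage your outline you should redirect your effort from constructing $\psi$ to analyzing these links.
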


\begin{proof}
  
We build $\KO$ from $\LO$ by adding vertices $(g,\G)$ in order of {\it decreasing} height in the poset, i.e. decreasing number of nodes $n(\G)$. Thus at each stage, we are attaching $(g,\G)$ along its entire upper link in $\KO$, so it suffices to show this upper link is contractible.  Note that  all trivalent marked graphs in $\KO$ already belong to $\LO$.  

We use the description of the upper link as the join of complexes $B(v)$ of ideal edges at nodes $v$ of $\G$, but we are only interested in blow-ups which result in vertices of $\KO$, so we 
define an ideal edge to be {\it legal} if blowing it up results in a vertex of $\KO$.  Note that an ideal edge is legal if and only if it separates at most one pair of half-edges contained in a cycle $C_i$, since if it separates two pairs $x_i,\bar x_i$ and $x_j,\bar x_j$, it blows up to an edge in $C_i\cap C_j$.  
Define $L(v)$ to be the subcomplex of $B(v)$ spanned by legal ideal edges.

 Let $(g,\G)$ be a vertex of $\KO-\LO$.  Then $\G$ contains a node $v$ which is in at least two cycles, so to prove that the upper link of $(g,\G)$ in $\KO$ is contractible, it suffices to prove the following.

\begin{claim} If $v$  is contained in at least 2 cycles, then the complex  $L(v)$ of legal ideal edges at $v$ is contractible.
\end{claim}
\begin{proof} 
The set of half-edges $H(v)$ at $v$ is the union of half-edges $A=\{a_1,\bar a_1,\ldots,a_r,\bar a_r\}$ contained in some $C_i$ and $B=\{b_1,\ldots,b_s\}$ not contained in any $C_i$. Fix an element $a\in A$ and define the {\it inside} of an ideal edge to be the side containing $a$, and the {\it size} to be the number of half-edges on the inside.  

 We are assuming $r\geq 2$.   We prove the claim by induction on $s$, starting with $s=0$.

If $s=0$,  consider the ideal edge $\alpha_0$ which separates $a$ and $\bar a$ from all other half-edges.  Let $C(\alpha_0)$ denote the star of $\alpha_0$ in $L(v)$.
We show that $L(v)$ deformation retracts onto $C(\alpha_0)$, and is therefore contractible, by adding vertices of $L(v)-C(\alpha_0)$ in order of increasing size.  We may think of size as a Morse function on $L(v)-C(\alpha_0)$, and extend it to $C(\alpha_0)$ by setting it equal to $0$ on vertices of $C(\alpha_0)$.    We need to show that the descending link with respect to this Morse function is contractible, for each vertex of $L(v)-C(\alpha_0)$. 

A vertex $\alpha$ of $L(v)$ which is not  in $C(\alpha_0)$ is a legal ideal edge which separates $a$ from $\bar a$.  Note that $\alpha$ must have odd size at most $2r-3$. Let $I$ be the inside of $\alpha$, and let $\alpha^+$ be the ideal edge with inside $I\cup \bar a$.   Then $\alpha^+$ is still legal, and  the descending link of $\alpha$ is a  (contractible) cone with cone point  $\alpha^+$, since any ideal edge of smaller size which is compatible with $\alpha$ is also compatible with $\alpha^+$.

Now consider the case $s\geq 1$, and write $L(r,s)$ for $L(v)$.   Choose  $b\in B$, and let $\beta_0$ be the ideal edge separating $a$ and $b$ from all other half-edges.  As before, let $C(\beta_0)$ be the star of $\beta_0$ in $L(r,s)$ and use size to define a Morse function.  If $J$ is the inside of $\beta$, let $\beta^+$ be the ideal edge with inside $J\cup b$. As before, the descending link is a cone with cone point $\beta^+$. The only difference comes at the very end, since maximal $\beta$ have size $2r+s-2$, in which case $\beta^+$ has a singleton on one side, so is not an ideal edge.

This is where induction is used.  Let $\beta$ be a vertex in $L(r,s)-C(\beta_0)$ of size $2r+s-2$.  The outside of $\beta$ is either $\{b,b'\}$ ($b'\neq b$), $\{b,\bar a\}$ or $\{b,a'\}$ ($a'\neq a,\bar a$), and its descending link is its entire link.  In each case, this link can be identified with $L(r,s-1)$, so is contractible by induction.  
\end{proof}
This completes the proof of the proposition.
 \end{proof}

\def\cprime{$\prime$}


\begin{thebibliography}{God03}

\bibitem[ChVo] {ChVo} Ruth Charney  and Karen Vogtmann, \emph{Finiteness properties of automorphism groups of right-angled Artin 
groups}, submitted. 

\bibitem[CCV] {CCV}
Ruth Charney, John Crisp and Karen Vogtmann, \emph{Automorphisms of 2-dimensional right-angled Artin groups},Geometry \& Topology \textbf{11} (2007) 2227--2264.

\bibitem[Col]{Col} 
Collins, Donald J., \emph{ Cohomological dimension and symmetric automorphisms of a free group},  Comment. Math. Helv.  64  (1989),  no. 1, 44--61. 


\bibitem[CuVo]{CuVo}
Mark Culler and Karen Vogtmann, \emph{Moduli of graphs and automorphisms of
  free groups}, Invent. Math. \textbf{84} (1986), no.~1, 91--119.  
  
  \bibitem[Dro]{Dro}
  Carl Droms, \emph{Graph groups, coherence, and three-manifolds.} 
J. Algebra \textbf{106} (1987), no. 2, 484Ð489. 

  
  \bibitem[DrSa]{DrSa}
  Cornelia Drutu and Mark Sapir, \emph{Tree-graded spaces and asymptotic cones of groups. } Topology \textbf{44} (2005), no.~5, 959--1058. 
  

\end{thebibliography}
\end{document}